\numberwithin{equation}{section}
\numberwithin{figure}{section}
\theoremstyle{plain}
\newtheorem{thm}{Theorem}[section]
\newtheorem{clm}[thm]{Claim}
\newtheorem{cor}[thm]{Corollary}
\newtheorem{lemma}[thm]{Lemma}
\newtheorem{prob}[thm]{Problem}
\newtheorem{ques}[thm]{Question}
\theoremstyle{definition}
\newtheorem{dfn}[thm]{Definition}
\newcommand\eps{\varepsilon}
\newcommand{\logb}{\log_{b+1}}
\newcommand{\cK}{\mathcal{K}}
\newcommand{\cF}{\mathcal{F}}
\newcommand{\cH}{\mathcal{H}}
\newcommand{\cT}{\mathcal{T}}
\renewcommand{\subset}{\subseteq}
\renewcommand{\setminus}{\smallsetminus}
\begin{document}

\title[Multistage Positional Games]{Multistage Positional Games}
\author[Juri Barkey]{Juri Barkey}
\author[Dennis Clemens]{Dennis Clemens}
\author[Fabian Hamann]{Fabian Hamann}
\author[Mirjana Mikala\v{c}ki]{Mirjana Mikala\v{c}ki}
\author[Amedeo Sgueglia]{Amedeo Sgueglia}

\thanks{
The second author is supported
by Deutsche Forschungsgemeinschaft (Project CL 903/1-1).
The fourth author is partly supported by Ministry of Education, Science and Technological Development, Republic of Serbia (Grant No. ‪451-03-68/2022-14/200125) and Provincial Secretariat for Higher Education and Scientific Research, Province of Vojvodina (Grant No. 142-451-2686/2021).
This research was conducted while the fifth author was a PhD student at the London
School of Economics.}

\address{Hamburg University of Technology, Institute of Mathematics, Am Schwarzenberg-Campus 3, 21073 Hamburg, Germany}
\email{juri.barkey@tuhh.de, dennis.clemens@tuhh.de, fabian.hamann@tuhh.de}
\address{University of Novi Sad, Faculty of Sciences, Department of Mathematics and Informatics, Trg Dositeja Obradovi\'{c}a 4, 21000 Novi Sad, Serbia}
\email{mirjana.mikalacki@dmi.uns.ac.rs}
\address{Department of Mathematics, University College London, Gower Street, London WC1E 6BT, UK}
\email{a.sgueglia@ucl.ac.uk}

\maketitle
%
%
%

\begin{abstract}
We initiate the study of a new variant of the Maker-Breaker positional game, which we call multistage game. Given a hypergraph $\cH=(\mathcal{X},\cF)$ and a bias $b \ge 1$, the $(1:b)$ multistage Maker-Breaker game on $\cH$ is played in several stages as follows. Each stage is played as a usual $(1:b)$ Maker-Breaker game, until all the elements of the board get claimed by one of the players, with the first stage being played on $\cH$. In every subsequent stage, the game is played on the board reduced to the elements that Maker claimed in the previous stage, and with the winning sets reduced to those fully contained in the new board. The game proceeds until no winning sets remain, and the goal of Maker is to prolong the duration of the game for as many stages as possible. In this paper we estimate the maximum duration of the $(1:b)$ multistage Maker-Breaker game, for biases $b$ subpolynomial in $n$, for some standard graph games played on the edge set of $K_n$: the connectivity game, the Hamilton cycle game, the non-$k$-colorability game, the pancyclicity game and the $H$-game. While the first three games exhibit a probabilistic intuition, it turns out that the last two games fail to do so. 

\end{abstract}

%
%
%

\section{Introduction}

Positional games are combinatorial games of perfect information and no chance moves, played by two players who alternately claim previously unclaimed elements of the given board.  Maker-Breaker games are a particular type of positional games, which attracted a lot of attention in the last couple of decades, starting with the seminal papers of~Hales and Jewett~\cite{HJ},  Erd\H{o}s and Selfridge~\cite{ES}, Chv\'{a}tal and Erd\H{o}s~\cite{CE} and Beck~\cite{beck1981positional, beck1982remarks},  to be followed by the groundbreaking results of Stojakovi\'{c} and Szab\'{o}~\cite{milos_tibor}, Gebauer and Szab\'o~\cite{GS09}, and Krivelevich~\cite{K11}. Formally stated,  given any positive integer $b$ and any hypergraph $\mathcal{H}=(\mathcal{X},\mathcal{F})$, a biased $(1:b)$ Maker-Breaker game on $\mathcal{H}$ is defined as follows.
Maker and Breaker alternate in moves, usually with Maker being the first player. In each move, Maker is allowed to claim one previously unclaimed element from the \textit{board} $\mathcal{X}$, and Breaker is allowed to claim up to $b$ such elements. If, throughout the game, Maker manages to occupy all the elements of a \textit{winning set} $F\in\mathcal{F}$, she is declared the winner of the game. Otherwise, i.e.\ if Breaker prevents her from doing so until all elements of $\mathcal{X}$ have been distributed among the two players, Breaker wins the game. Note that these rules imply monotonicity in Breaker's bias~\cite{CE}. In particular, there must be a \textit{threshold bias} $b_{\mathcal{H}}$ such that Breaker wins if and only if $b\geq b_{\mathcal{H}}$. The size of such threshold biases has been investigated for many standard graph games in recent years and we refer the interested reader to~\cite{beck2008combinatorial,hefetz2014positional}.

\smallskip
Maker-Breaker games are played on various boards, but the most studied ones are those played on $E(K_n)$, i.e. the edge set of the complete graph on $n$ vertices. They will also be the focus of this paper, and we will mainly look at the games defined by the hypergraphs
$\mathcal{C}_{n}$,
$\mathcal{HAM}_{n}$,
$\mathcal{H}_{H,n}$,
$\mathcal{PAN}_{n}$ and
$\mathcal{COL}_{n,k}$
on the vertex set $\mathcal{X}=E(K_n)$,
with the hyperedges being the edge sets of
all spanning trees of $K_n$, all Hamilton cycles of $K_n$, all copies of a fixed graph $H$ in $K_n$,
all pancyclic spanning subgraphs of $K_n$ and
all subgraphs of $K_n$ with chromatic number larger than $k$, respectively.

\smallskip

One reason for the strong interest in such games is their deep connection to the theory of random graphs. Indeed, already Chv\'{a}tal and Erd\H{o}s~\cite{CE} suspected the following behaviour for certain biased Maker-Breaker games on $K_n$: the more likely winner in the game between two random players is the same as the winner between two perfect players. 
This behaviour is usually referred to as the \textit{random graph intuition}.
To make it more precise, note that if Maker and Breaker are replaced by ‘random players’
who select their edges uniformly at random, the final subgraph of $K_n$ consisting only of Maker's edges is a random graph $G(n,M)$ chosen uniformly among all graphs on $n$ vertices with $M=\lfloor \frac{1}{b+1}\binom{n}{2} \rfloor$ edges, which is tightly linked to the Erd\H{o}s-Renyi random graph model $G(n,p)$ when $p=\frac{1}{b+1}$.
If the suspicion of Chv\'{a}tal and Erd\H{o}s is true, then the threshold bias $b_{\mathcal{H}}$ should asymptotically be the inverse of the threshold probability ${p}_{\mathcal{H}}$ for the property that $G(n,p)$
contains an element of $\mathcal{H}$. 

\smallskip

For example, it is well known that the threshold probability for connectivity in $G(n,p)$ is $(1+o(1))\frac{\ln n}{n}$, and Gebauer and Szab\'o~\cite{GS09} proved that $b_{\mathcal{C}_n} = (1+o(1)) \frac{n}{\ln n}$, thus showing that the above intuition holds for the connectivity game. This was extended even further by Krivelevich~\cite{K11} to the Hamilton cycle game, by showing that $b_{\mathcal{HAM}_n} = (1+o(1)) \frac{n}{\ln n}$, with the threshold probability for the existence of a Hamilton cycle being $(1+o(1))\frac{\ln n}{n}$, as proved by Koml\'os and Szemer\'edi~\cite{komlos1983limit} and independently by Bollob\'as~\cite{bollobas1984evolution} based on the method of P\'{o}sa~\cite{posa}.
Up to constant factors the same holds for the game $\mathcal{COL}_{n,k}$ with $k$ being a constant, where Hefetz, Krivelevich, Stojakovi\'c, and Szab\'o~\cite{hefetz2008planarity} proved that $b_{\mathcal{COL}_{n,k}}$ is of the order $\frac{n}{k\ln k}$, while the probability threshold for non-$k$-colorability is $(2+o(1))\frac{k \ln k}{n}$.
Further instances of the random graph intuition can be found in e.g.~\cite{beck1996foundations,C16Rand}.

On the other hand, the pancyclicity game and the $H$-game fail to satisfy the above intuition. 
Indeed, Ferber, Krivelevich, and Naves~\cite{F15Gen} proved that the threshold bias $b_{\mathcal{PAN}_n}$ of the pancyclicity game is close to $\sqrt{n}$,
while Cooper and Frieze~\cite{C87Pan} proved that the threshold probability for the pancyclicity property is $(1+o(1))\frac{\ln n}{n}$.
Bednarska and \L uczak~\cite{bednarska2000biased} proved that the threshold bias $b_{\mathcal{H}_{H,n}}$ of the $H$-game is of the order $n^{1/m_2(H)}$
if $H$ has at least two edges, where $m_2(H)=\max_{G\subseteq H \atop v(G)\geq 3} \frac{e(G)-1}{v(G)-2}$ denotes the \textit{maximum 2-density} of $H$. However it is well known~\cite{bollobas1981threshold} that the threshold probability for the random graph $G(n,p)$ to contain a copy of $H$ is $n^{-1/m(H)}$, where $m(H)=\max_{G\subseteq H \atop v(G)\geq 1} \frac{e(G)}{v(G)}$ denotes the \textit{maximum density} of $H$.

Although these two games provide examples for which the random graph intuition is not true, it should be noted that a deep connection to random graphs still exists. In fact, in both cases it turns out that the results on the threshold biases are linked to \textit{resilience properties} of $G(n,p)$. 

\smallskip

In this paper, we initiate a different perspective on the random graph intuition. While in the $(1:b)$ Maker-Breaker game, Maker's graph is forced to be sparse when Breaker's bias is large, we now want to force sparseness by playing multiple games consecutively, where each new game shrinks the board on which the next game is allowed to be played.

Given a hypergraph $\cH=(\mathcal{X},\cF)$ and a bias $b \ge 1$, we define the \textit{multistage $(1:b)$ Maker-Breaker game} on $\mathcal{H}$ as follows. The game proceeds 
in several stages, with each stage being played as a usual $(1:b)$ Maker-Breaker game.
For convenience we define $\mathcal{X}_0:=\mathcal{X}$, $\cF_0:=\cF$ and $\cH_0:=\cH$.
Then, for $i \ge 1$, in Stage $i$, Maker and Breaker play on the board $\mathcal{X}_{i-1}$, consider the hypergraph $\mathcal{H}_{i-1}=(\mathcal{X}_{i-1},\mathcal{F}_{i-1})$, and alternate in turns in which Maker occupies exactly one unclaimed element of $\mathcal{X}_{i-1}$, and afterwards Breaker occupies up to $b$ unclaimed elements of $\mathcal{X}_{i-1}$, with Maker moving first.
Once all the elements of $\mathcal{X}_{i-1}$ have been distributed among both players, we let $\mathcal{X}_i\subset \mathcal{X}_{i-1}$ be the set of all elements claimed by Maker in Stage $i$, and we let $\mathcal{F}_i=\left\{F\in\mathcal{F}_{i-1}:~ F\subset \mathcal{X}_i\right\}$ be the set of all remaining winnings sets (from $\mathcal{F}$) that Maker managed to fully occupy in Stage $i$.
Observe that this defines a new hypergraph $\cH_i:=(\mathcal{X}_i,\cF_i)$.
We stop the game the first time that there are no winning sets left anymore, and our main question is how long Maker can delay the stop of a given game.
For that, we define the threshold parameter $\tau(\mathcal{H},b)$ as the largest number $s$ such that, in the $(1:b)$ multistage Maker-Breaker game on hypergraph $\mathcal{H}$, Maker has a strategy to ensure $\mathcal{F}_s\neq \varnothing$ and thus to play at least $s$ stages.

\medskip

A random graph intuition in this setting corresponds to the assumption that a random game is likely to last as long as a perfectly played game.
Note that if a multistage $(1:b)$ Maker-Breaker game on $K_n$ is played by two random players, then $\mathcal{X}_i$ is the edge set of a uniform random graph
$G(n,M)$ with roughly $\left( \frac{1}{b+1} \right)^i \binom{n}{2}$ edges.
Therefore, if true, the random graph intuition would suggest that Maker can maintain a connected spanning subgraph, a Hamilton cycle, or a non-$k$-colorable graph for roughly $\log_{b+1}(n)-\log_{b+1}(\ln (n))$ stages. We show this is indeed asymptotically the best that Maker can do.  The following statements assume that $b$ is \textit{subpolynomial} in the number of vertices $n$, which means that $b = o(n^\eps)$ for all $\eps > 0$.

\begin{thm}[Multistage Hamilton cycle game]\label{thm:HAM.multi}
If $b$ is subpolynomial in $n$, then
$$
\tau(\mathcal{HAM}_{n},b) = ( 1 + o(1) ) \log_{b+1}(n)\, .
$$
\end{thm}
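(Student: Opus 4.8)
The plan is to prove matching bounds; the upper bound is elementary, while the lower bound carries all the content.

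For the \emph{upper bound}, since Breaker may claim $b$ edges per move, in a single stage on a board of size $N$ Maker finishes having claimed at most $\lceil N/(b+1)\rceil \le N/(b+1)+1$ edges, so iterating from $|\mathcal{X}_0|=\binom n 2$ gives $|\mathcal{X}_i|\le (b+1)^{-i}\binom n 2+2$ for every $i$. A Hamilton cycle of $K_n$ has $n$ edges, so $\mathcal{F}_i\neq\varnothing$ forces $|\mathcal{X}_i|\ge n$, hence $(b+1)^i \le \binom n 2/(n-2)\le n$ for $n\ge 3$, i.e.\ $i\le\log_{b+1}(n)$. Thus $\tau(\mathcal{HAM}_n,b)\le\log_{b+1}(n)\le(1+o(1))\log_{b+1}(n)$, and Breaker needs no strategy beyond claiming $b$ edges each turn.

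For the \emph{lower bound} $\tau(\mathcal{HAM}_n,b)\ge(1-o(1))\log_{b+1}(n)$, I would iterate a one-stage \emph{dilution lemma}. Fix a quantitative pseudorandomness/expansion property $\mathcal{P}=\mathcal{P}(n,d)$ of $n$-vertex graphs such that (i) any graph with property $\mathcal{P}(n,d)$ for $d\ge d_0 := (b+1)^3\log^2 n$ has minimum degree at least $d$ and contains a Hamilton cycle of $K_n$ (via P\'osa's rotation--extension applied to good expanders), and (ii) if $G$ has property $\mathcal{P}(n,d)$ with $d\ge d_0$, then in the $(1:b)$ Maker-Breaker game on the board $E(G)$ Maker has a strategy ending with her claimed subgraph $G'\subseteq G$ having property $\mathcal{P}\big(n,\,d/((1+o(1))(b+1))\big)$, where the $o(1)$ is uniform over all $d\ge d_0$. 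Granting this, run the multistage game from $\mathcal{X}_0=E(K_n)$, which has property $\mathcal{P}(n,n-1)$, applying the lemma to each host $\mathcal{H}_{i-1}$: while the parameter stays above $d_0$, property $\mathcal{P}$ keeps Maker's graph spanning and Hamiltonian, so $\mathcal{F}_i\neq\varnothing$. After $s$ stages the parameter is at least $n/((1+o(1))(b+1))^s$, which exceeds $d_0$ whenever $s\le \frac{\ln n-\ln d_0}{\ln(b+1)+o(1)}$; since $b$ is subpolynomial in $n$ we have $\ln d_0=o(\ln n)$ while $\ln(b+1)$ is bounded away from $0$, so the right-hand side equals $(1-o(1))\log_{b+1}(n)$, as required.

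To prove the dilution lemma I would have Maker run two interleaved sub-strategies on $E(G)$: a \emph{degree component}, in which — treating the $\deg_G(v)$ edges at each vertex $v$ as a box — Maker uses an Erd\H{o}s--Selfridge-type weighted potential argument to claim at least $\frac{\deg_G(v)}{b+1}-O(\sqrt{\deg_G(v)\ln n})$ edges at every $v$, which equals $\frac{\deg_G(v)}{(1+o(1))(b+1)}$ since $\deg_G(v)\ge d_0\gg (b+1)^2\ln n$, giving the minimum-degree bound required by $\mathcal{P}$; and an \emph{expansion component}, which — again by a potential or pairing argument tailored to the host $G$ — secures the small co-degrees and small-set expansion needed for $\mathcal{P}$. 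These must be merged into one Maker strategy, by a priority scheme or a single combined potential, while respecting that each edge lies in two vertex boxes. I expect the main obstacle to be exactly this combination: achieving the near-fair degree share $\frac{1}{(1+o(1))(b+1)}\deg_G(v)$ at \emph{every} vertex simultaneously with enough pseudorandomness for Hamiltonicity, and with a per-stage relative loss small enough that it does not accumulate beyond a $(1+o(1))$ factor in the exponent over the $\Theta(\log_{b+1}n)$ stages — which is most constraining when $b$ is bounded. A secondary difficulty is that from the second stage on the board is an arbitrary graph in the class $\mathcal{P}$ rather than $K_n$, so both the expander-Hamiltonicity input and the game-theoretic potential estimates must be stated for such host graphs.
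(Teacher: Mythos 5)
Your upper bound is correct and essentially the paper's: the board shrinks by a factor of roughly $b+1$ per stage, so after $(1+o(1))\log_{b+1}(n)$ stages fewer than $n$ edges remain and no Hamilton cycle can survive.

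Your lower bound plan is in the right spirit, but the ``dilution lemma'' (ii), which carries all of the content, is left unproved, and you yourself correctly flag the genuine obstacle: merging a per-vertex degree potential with an expansion potential into a single Maker strategy on an arbitrary pseudorandom host $G$, while keeping the per-stage loss small enough to survive $\Theta(\log_{b+1}n)$ iterations. The paper dissolves this obstacle by not maintaining a graph pseudorandomness class at all. Instead it fixes, once and for all on $K_n$, a family $\mathcal{F}=\mathcal{F}_1\cup\dots\cup\mathcal{F}_s$ of bipartite edge sets $E_{K_n}(A,B)$ over a range of size regimes for $A$ and $B$, chosen so that any spanning subgraph hitting every $F\in\mathcal{F}$ satisfies the expansion and connectivity hypotheses of a black-box Hamiltonicity criterion (Theorem~\ref{thm:hamilton_expansion}). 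It then proves a \emph{multistage discrepancy} criterion (Lemma~\ref{lem:multistagediscrepancy}, built from Lemmas~\ref{lem:alphafraction} and~\ref{lem:biaseddiscrepany}): a single potential-function strategy lets Maker keep a $(\tfrac{1}{b+1}-\delta)$-fraction of every leftover $F\cap\mathcal{X}^i$ in every stage, so after $t=(1-o(1))\log_{b+1}(n)$ stages each $F$ still contains a Maker edge. Because the family is hardwired to $K_n$, nothing must be re-proved about the graph structure of the intermediate boards $\mathcal{X}_i$ --- the invariant is carried by the leftover sizes $k_{i,j}=\min_{F\in\mathcal{F}_j}|F\cap\mathcal{X}^i|$ alone --- and because the degree requirement and the small-set expansion requirement are both encoded uniformly as ``hit at least one edge of each $E_{K_n}(A,B)$'' (for $|A|$ ranging from $1$ up to $n/\ln n$), one potential over one hypergraph serves both, so there is no merging problem. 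That reframing, from a pseudorandomness class $\mathcal{P}(n,d)$ with two separate conditions to a single hypergraph of winning sets $\mathcal{F}$, is precisely the idea your plan is missing; the per-stage loss $\delta$ is then tuned to the target number of stages exactly as you anticipate, inside the proof of Lemma~\ref{lem:multistagediscrepancy}.
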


\begin{cor}[Multistage connectivity game]\label{cor:Con.multi}
If $b$ is subpolynomial in $n$, then
$$
\tau(\mathcal{C}_{n},b) = ( 1 + o(1) ) \log_{b+1}(n)\, .
$$
\end{cor}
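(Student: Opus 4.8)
The plan is to deduce Corollary~\ref{cor:Con.multi} directly from Theorem~\ref{thm:HAM.multi}, by combining a monotonicity argument for the lower bound with an elementary edge-counting argument for the upper bound; no new game-theoretic idea should be needed. Throughout we keep the hypothesis that $b$ is subpolynomial in $n$, which is exactly what guarantees $\log_{b+1}(n)\to\infty$ so that the statement $\tau = (1+o(1))\log_{b+1}(n)$ is meaningful.

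For the \textbf{lower bound} $\tau(\mathcal{C}_n,b)\ge\tau(\mathcal{HAM}_n,b)$ I would use the simple fact that deleting any one edge from a Hamilton cycle of $K_n$ leaves a spanning tree; equivalently, every hyperedge of $\mathcal{HAM}_n$ contains a hyperedge of $\mathcal{C}_n$. Now let Maker play the multistage connectivity game by mimicking, move for move, an optimal strategy of hers for the multistage Hamilton cycle game. This is legitimate because both games start on the same board $E(K_n)$ and, in every stage, the current board $\mathcal{X}_i$ is determined only by Maker's past moves, not by which family of winning sets one is tracking. By Theorem~\ref{thm:HAM.multi} this strategy guarantees that after stage $s:=\tau(\mathcal{HAM}_n,b)$ the board $\mathcal{X}_s$ contains some Hamilton cycle $F\in\mathcal{HAM}_n$; then $\mathcal{X}_s$ also contains a spanning tree $F'\subseteq F$, and since $F'\in\mathcal{C}_n=\mathcal{F}_0$ and $F'\subseteq\mathcal{X}_s\subseteq\mathcal{X}_{s-1}\subseteq\dots\subseteq\mathcal{X}_0$, the tree $F'$ survives in $\mathcal{F}_s$ for the connectivity game, so $\mathcal{F}_s\neq\varnothing$. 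Hence $\tau(\mathcal{C}_n,b)\ge s=(1+o(1))\log_{b+1}(n)$.

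For the \textbf{upper bound} I would not even need Breaker to be clever: let Breaker claim exactly $b$ (still unclaimed) elements in every round whenever possible. Then stage $i$ lasts at most $|\mathcal{X}_{i-1}|/(b+1)+1$ rounds, and since Maker claims precisely one element per round, $|\mathcal{X}_i|\le |\mathcal{X}_{i-1}|/(b+1)+1$. Iterating from $|\mathcal{X}_0|=\binom{n}{2}$ and summing the resulting geometric series, whose ratio is $1/(b+1)\le 1/2$ since $b\ge1$, gives $|\mathcal{X}_s|\le \binom{n}{2}/(b+1)^s+2$. Since a connected graph on $n$ vertices has at least $n-1$ edges, $\mathcal{F}_s\neq\varnothing$ in the connectivity game forces $|\mathcal{X}_s|\ge n-1$, hence $(b+1)^s\le \binom{n}{2}/(n-3)<n$ for $n$ large, i.e.\ $s<\log_{b+1}(n)$. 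Thus $\tau(\mathcal{C}_n,b)\le\log_{b+1}(n)$, and together with the lower bound this yields $\tau(\mathcal{C}_n,b)=(1+o(1))\log_{b+1}(n)$.

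I do not expect any genuine obstacle here: the entire conceptual content of the corollary is inherited from Theorem~\ref{thm:HAM.multi}. The only points that require a little care are phrasing the hypergraph-monotonicity step so that a strategy for the Hamilton cycle game can literally be replayed in the connectivity game — in particular checking that the reduced winning-set families behave correctly under the successive board reductions $\mathcal{X}_0\supseteq\mathcal{X}_1\supseteq\dots$ — and confirming that the crude geometric estimate on $|\mathcal{X}_s|$ is already sharp enough to match the lower bound up to the $(1+o(1))$ factor, which it is.
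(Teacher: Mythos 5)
Your proof is correct and follows exactly the paper's approach: the upper bound is the same trivial edge-count (after $\log_{b+1}(n)$ stages the board has fewer than $n-1$ edges, so it cannot be connected), and the lower bound is inherited from Theorem~\ref{thm:HAM.multi} because every Hamilton cycle contains a spanning tree, so Maker's Hamilton-cycle strategy simultaneously keeps $\mathcal{F}_s$ nonempty for the connectivity game. The extra care you take to justify the ``replay'' of the Hamilton-cycle strategy and to spell out the geometric series for $|\mathcal{X}_s|$ is sound but not needed beyond what the paper states in one sentence at the start of Section~\ref{sec:HAM.multi}.
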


\begin{thm}[Multistage non-$k$-colorability game]\label{thm:kCol.multi}
If $b$ and $k$ are subpolynomial in $n$, and $k \geq 2$, then
$$
\tau(\mathcal{COL}_{n,k},b) = \left( 1 + o(1) \right) \log_{b+1}(n)\, .
$$
\end{thm}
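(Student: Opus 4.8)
The plan is to prove matching upper and lower bounds on $\tau(\mathcal{COL}_{n,k},b)$. For the upper bound $\tau(\mathcal{COL}_{n,k},b) \le (1+o(1))\logb(n)$, I would argue via a counting/compression argument on Breaker's side: after $s$ stages the surviving board $\mathcal{X}_s$ has at most $(\tfrac{1}{b+1})^s\binom{n}{2}$ edges in the random-play heuristic, but for a genuine proof Breaker needs a strategy guaranteeing $\mathcal{X}_s$ is $k$-colorable. Since a graph with fewer than roughly $\tfrac{k\ln k}{2}\cdot n$ edges is $k$-colorable is false in general, the right tool is that Breaker can play a box game / Erd\H{o}s--Selfridge-type argument on $\mathcal{COL}_{n,k}$ to keep Maker's graph $k$-colorable once the board is small enough; iterating the bias-$(b+1)$ shrinking $\logb(n) - \logb(\ln n) + O(1)$ times brings the board below the relevant threshold. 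Concretely I would show: if $|\mathcal{X}_{i-1}| \le c\, n/(k\ln k)$ for a suitable small constant $c$, then Breaker wins stage $i$ as a one-shot non-$k$-colorability game (using that the threshold bias of $\mathcal{COL}_{n,k}$ is $\Theta(n/(k\ln k))$ — but since the board is a sparse graph, not all of $K_n$, one invokes a pairing or potential argument against the short odd cycles / dense subgraphs that force high chromatic number), hence $\mathcal{F}_i = \varnothing$. Counting how many halvings-by-$(b+1)$ are needed to pass from $\binom{n}{2}$ to $O(n/(k\ln k))$ gives $s \le \logb\!\big(\tfrac{(n-1)k\ln k}{2c}\big) = (1+o(1))\logb(n)$, using $k$ subpolynomial.

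For the lower bound $\tau(\mathcal{COL}_{n,k},b) \ge (1-o(1))\logb(n)$, Maker must play so that after each stage her graph still contains a subgraph forcing chromatic number $> k$, and is large enough to repeat. The natural certificate is a $K_{k+1}$ (or, to be economical with edges, many vertex-disjoint odd cycles after contracting — but $K_{k+1}$ is cleanest): if Maker's graph contains $K_{k+1}$ it is not $k$-colorable. So the strategy is: in each stage, Maker plays to claim a large "robust" structure — I would have her build a graph on $\mathcal{X}_{i-1}$ that is both large (say, contains $\ge (1-o(1))\tfrac{1}{b+1}|\mathcal{X}_{i-1}|$ edges, or at least contains a large clique-minor-rich / high-min-degree subgraph) and itself contains a copy of $\mathcal{COL}$ for the next round. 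The key building block is a Maker strategy, on a board that is a dense graph $G$ (a typical survivor of previous stages), to claim a spanning subgraph that still contains a copy of $K_{k+1}$-game-winnable structure; this is where resilience of $G(n,p)$-like graphs enters: a graph surviving $i$ stages behaves like $G(n,p)$ with $p = (b+1)^{-i}$, and as long as $p \gg$ the relevant threshold, such a graph robustly contains the non-$k$-colorability game. I would set up an auxiliary lemma: there is a function $m = m(n,k,b)$ such that as long as the current board has $\ge m$ edges and is "$p$-pseudorandom," Maker can win one stage leaving a board with $\ge (1-\eps)\tfrac{1}{b+1}(\text{edges})$ edges that is again "$p/(b+1)$-pseudorandom" and still not $k$-colorable under optimal continuation. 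Iterating $(1-o(1))\logb(n)$ times keeps the board above threshold $\Theta(kn\ln k)$ (again subpolynomial $k$ is used so that $\logb$ of the threshold is $o(\logb n)$).

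The main obstacle, I expect, is the \emph{joint} requirement in the lower bound: Maker must simultaneously (a) keep her per-stage graph large/pseudorandom enough for the \emph{next} stage to be winnable, and (b) ensure it is non-$k$-colorable \emph{now}. Handling (a) alone is a "Maker claims a pseudorandom spanning subgraph" result (a box-game or Beck-type criterion giving min-degree and expansion, essentially the building block behind Theorem~\ref{thm:HAM.multi}), and handling (b) alone is the classical non-$k$-colorability Maker win of Hefetz et al.; but a naive product of the two wastes a constant factor per stage and would only give $c\cdot\logb(n)$ for $c<1$. To get the sharp constant $1$ I would interleave: run the pseudorandomness-maintaining strategy on almost all of the board, reserving a tiny sub-board of $o\!\big(\tfrac{1}{b+1}|\mathcal{X}_{i-1}|\big)$ edges on which Maker plays the colorability game — small enough not to affect the edge count to first order, yet (since it only needs to host a $K_{k+1}$-type win, costing $\mathrm{poly}(k) = n^{o(1)}$ edges) large enough to be won. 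One must check these two sub-games can be scheduled move-by-move without interference and that the reserved sub-board is itself sufficiently pseudorandom in the survivor graph; that bookkeeping, rather than any single hard inequality, is the crux. Given Theorem~\ref{thm:HAM.multi} and Corollary~\ref{cor:Con.multi} in hand, much of the pseudorandomness-maintenance machinery can be quoted, so the genuinely new content is this interleaving plus the (easy, since $k$ is subpolynomial) check that $\logb$ of all the threshold quantities is $o(\logb n)$.
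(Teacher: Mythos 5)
Both directions of your proposal miss the key ideas in the paper, and the machinery you sketch has genuine gaps.

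\textbf{Upper bound.} You correctly note that ``few edges implies $k$-colorable'' is false in general, but you never resolve the problem you raise: a board with $O(n/(k\ln k))$ edges could still be a small dense graph of high chromatic number, and you do not supply an actual pairing/potential strategy for Breaker once the board is small. The paper's Breaker strategy avoids edge-counting entirely: it targets \emph{arboricity}. The starting board $E(K_n)$ is a union of $\lceil n/2\rceil$ edge-disjoint forests, and Lemma~\ref{lem:acycle} (a corollary of a theorem of Hefetz et al.) says that on a union of at most $b+1$ edge-disjoint forests, Breaker wins the $(1:b)$ cycle game. Breaker groups the forests into batches of size $\le b+1$, plays the cycle game on each batch, and thereby ensures Maker's graph within each batch is a single forest — so arboricity drops from $k_i$ to $\lceil k_i/(b+1)\rceil$ per stage. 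After $\logb(n)+1$ stages the board is a forest, hence bipartite, hence $k$-colorable for every $k\ge 2$. There is no threshold-bias invocation and no dependence on $k$ at all. Your sketch does not contain this mechanism, and without it the upper bound does not go through.

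\textbf{Lower bound.} You propose that Maker maintain a $K_{k+1}$ (or a clique-minor-rich pseudorandom subgraph) and then interleave a pseudorandomness-maintaining strategy with a colorability sub-game, acknowledging that the ``joint requirement'' and the scheduling bookkeeping are the crux. That crux is exactly what the paper's approach dissolves. The certificate of non-$k$-colorability is not a clique but simply: \emph{Maker has an edge inside every vertex set of size $\lceil n/k\rceil$}. By pigeonhole, any proper $k$-coloring has a color class of size $\ge \lceil n/k\rceil$, which must be independent — so an edge in every such set blocks every $k$-coloring. This certificate is a family $\mathcal{F}=\{E_{K_n}(A):|A|=\lceil n/k\rceil\}$ that feeds directly into Lemma~\ref{lem:multistagediscrepancy} (the multistage discrepancy lemma proved in Section~2): Maker simply plays to keep one element of every $F\in\mathcal{F}$ across stages, and the parameter check gives $(1-o(1))\logb(n)$ stages. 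There is no pseudorandomness, no resilience, no reserved sub-board, and no interleaving. Your plan is not just a different route — it leaves the hard step (your item about scheduling two sub-games without interference and without losing a constant factor per stage) as a conjecture, whereas the paper's reduction makes the whole issue vanish.
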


However the random graph intuition fails for both the multistage $H$-game and the multistage pancyclicity game, as, if played  randomly, they would typically last $\left( \frac{1}{m(H)} + o(1) \right) \log_{b+1}(n)$ and $(1+o(1)) \logb(n)$ stages, respectively, while we can show the following.

\begin{thm}[Multistage $H$-game]\label{thm:H-game.multi}
If $b$ is subpolynomial in $n$, then
$$
\tau(\mathcal{H}_{H,n},b) = \left( \frac{1}{m_2(H)} + o(1) \right) \log_{b+1}(n)\, .
$$
\end{thm}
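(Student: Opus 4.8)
The plan is to prove matching upper and lower bounds on $\tau(\mathcal{H}_{H,n},b)$, both governed by the threshold bias $b_{\mathcal{H}_{H,n}}$ of order $n^{1/m_2(H)}$ coming from Bednarska and \L uczak~\cite{bednarska2000biased}. For the \textbf{upper bound}, I would argue that after $s$ stages Maker's graph $G_s$ lives on roughly $M_s \approx (b+1)^{-s}\binom{n}{2}$ edges (an easy induction: at every stage Maker claims at most a $\frac{1}{b+1}$-fraction of the current board, since Breaker claims $b$ edges for each of Maker's). If $s = (1+\eps)\frac{1}{m_2(H)}\logb(n)$, then $G_s$ has fewer than $n^{2 - (1+\eps)/m_2(H) \cdot \frac{2\ln(b+1)}{\ln n}}$... more cleanly: the number of edges of $G_s$ drops below the deterministic threshold $O(n^{2-1/m_2(H)})$ (the maximum number of edges in an $H$-game board on which Breaker wins, cf.\ the Bednarska--\L uczak lower bound construction) once $s$ exceeds $\frac{1}{m_2(H)}\logb(n)$ by any constant factor $1+\eps$. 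At that point, on \emph{every} board of that size Breaker has a winning strategy in the single-stage game, so Maker cannot survive stage $s+1$; hence $\tau \le (1+o(1))\frac{1}{m_2(H)}\logb(n)$. The one subtlety is that Breaker's single-stage strategy must win not just ``on $K_n$ with small bias'' but ``on an arbitrary sparse graph with bias $b$'', so I would invoke (or re-derive via the Bednarska--\L uczak / Erd\H{o}s--Selfridge-type potential argument) the statement that whenever the host graph has at most $cn^{2-1/m_2(H)}$ edges, Breaker wins the $(1:b)$ $H$-game; the point is that the relevant parameter is really the \emph{number of copies of subgraphs of $H$} in the board, which is small when the board is sparse.

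For the \textbf{lower bound}, the strategy is for Maker to survive $s = (1-\eps)\frac{1}{m_2(H)}\logb(n)$ stages. Here I would \emph{not} try to keep a single copy of $H$ alive; instead Maker should aim, in each stage, to reduce the board while keeping it sufficiently dense and ``well-spread'' that the next stage is again winnable. The natural target is: at the end of stage $i$, Maker's graph contains a (near-)complete graph $K_{n_i}$ on a carefully chosen vertex subset, with $n_i$ shrinking slowly. If Maker can guarantee that playing the single-stage $(1:b)$ Maker-Breaker box/clique-type game she claims all edges of some $K_{n_i}$ with $n_{i}$ only a constant factor (or $n^{o(1)}$ factor) smaller than $n_{i-1}$, then after $s$ stages $n_s$ is still polynomial in $n$ — in fact as long as $n_s \ge v(H)$ we still have a copy of $H$, but we need $n_s$ large enough that the \emph{single-stage} $H$-game on $K_{n_s}$ with bias $b$ is Maker's win, i.e.\ $n_s \gtrsim b^{m_2(H)+o(1)}$. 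Since $b = n^{o(1)}$, we have room to let $n_i$ shrink by a factor $n^{o(1)}$ per stage for up to $(1-\eps)\frac{\ln n}{\ln(b+1)} \cdot \frac{1}{m_2(H)}$ stages. So the key building block is a single-stage lemma: in the $(1:b)$ Maker-Breaker game on $E(K_N)$, Maker can claim all edges of a clique on $N^{1-o(1)}$ (or $N/\mathrm{polylog}$, or $\Omega(N/b)$ — whichever is cleanest and suffices) vertices. This is a clique-building result in the spirit of the box game / the $K_q$-building strategies, and I would either cite an existing such result or prove it by a pairing / potential argument, iterating a "claim one vertex's whole star" subroutine.

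The \textbf{main obstacle} I anticipate is the lower bound, specifically making the per-stage shrinkage quantitatively compatible with the target number of stages: I need the single-stage clique Maker builds to lose only a $(b+1)^{o(1)} = n^{o(1)}$ factor in the number of vertices (equivalently, Maker's clique on $E(K_N)$ has $N^{1-o(1)}$ vertices when $b=N^{o(1)}$), because $\frac{1}{m_2(H)}\logb(n)$ stages each costing a factor $n^{\delta}$ would cost a total factor $n^{\delta/m_2(H) \cdot \logb(n)}$, which is way too much unless $\delta \to 0$ appropriately with $n$. So the clique-building subroutine must be essentially optimal (up to $n^{o(1)}$), and I expect the cleanest route is: reserve a vertex set $W_i$ of size $n_{i-1}^{1-o(1)}$, and have Maker play, on the $\binom{|W_i|}{2}$ edges inside $W_i$, a strategy that claims a clique on $n_i = |W_i|^{1-o(1)}$ vertices — then track the constants carefully to confirm the product of all shrinkage factors still leaves $n_s \ge b^{m_2(H)+1}$, at which point the Bednarska--\L uczak Maker strategy wins one final ordinary $H$-game. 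A secondary obstacle is the boundary/ceiling issue that $\logb(n)$ need not be an integer and that $H$ having exactly one edge (or being a single edge, $m_2$ undefined) must be excluded or handled trivially; I would note $H$ has at least two edges so $m_2(H)$ is well-defined, matching the hypothesis in~\cite{bednarska2000biased}.
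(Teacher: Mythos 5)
Your proposal has genuine gaps in both directions, and neither matches the paper's route.

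\textbf{Upper bound.} The claim you want to invoke — that Breaker wins the $(1:b)$ $H$-game on \emph{any} host graph with at most $cn^{2-1/m_2(H)}$ edges — is false as stated. Take the board to be a clique $K_m$ with $m = (b+1)^{m_2(H)+1}$; since $b$ is subpolynomial, $\binom{m}{2} = n^{o(1)} \ll n^{2-1/m_2(H)}$, yet by Bednarska--\L uczak Maker wins the $(1:b)$ $H$-game on $K_m$ because $b < m^{1/m_2(H)}$. So edge count alone is the wrong parameter: what matters is whether copies of $H$ (and of its densest subgraph $K$ with $d_2(K)=m_2(H)$) can cluster tightly. You acknowledge this in passing (``the relevant parameter is really the number of copies of subgraphs of $H$''), but the fix is not automatic — Breaker must \emph{actively} prevent clustering, since a passive Breaker cannot stop the board from concentrating into a dense patch. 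The paper's Breaker strategy is built around exactly this: it bounds the density of any ``$s$-bunch'' of overlapping copies of $K$ from below by $m_2(K)-\delta$, then uses Beck's criterion (Theorem~\ref{beck_variant}) to kill, stage by stage, every bunch on $\ge tv(K)$ vertices, and only afterward does local clean-up on the small residual clusters. Without this structural argument, your upper bound does not close.

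\textbf{Lower bound.} Your proposed Maker strategy — maintaining a near-complete graph $K_{n_i}$ with $n_i = |W_i|^{1-o(1)}$ at each stage via a ``claim one vertex's whole star'' / clique-building subroutine — is not achievable. Even in the unbiased game, Breaker can prevent Maker from building any clique larger than $\Theta(\log_2 n)$ (Erd\H{o}s--Selfridge); a fortiori, against bias $b\ge1$ Maker cannot claim an entire star of a vertex (Breaker takes $b$ edges at that vertex for each of Maker's), let alone a clique on a polynomial number of vertices. So the invariant you want to maintain is unattainable after even one stage. The paper sidesteps this entirely by using hypergraph containers (Theorem~\ref{thm:containers}): instead of forcing Maker's graph to be \emph{structured}, it suffices for Maker's graph to \emph{not be $H$-free}, and the container theorem reduces this to hitting one edge in each of at most $n^{2sn^{2-1/m_2(H)}}$ sets $E(K_n)\setminus C_i$, each of size $\ge\delta\binom{n}{2}$. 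These are exactly the parameters $(k,\ln|\mathcal F|)\approx(n^2,\,n^{2-1/m_2(H)}\log n)$ for which the multistage discrepancy criterion (Lemma~\ref{lem:multistagediscrepancy}) yields $(\tfrac{1}{m_2(H)}-o(1))\logb(n)$ stages. This is a much weaker target than maintaining a dense subgraph, and it is the reason the container route succeeds where the clique route cannot.
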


\begin{thm}[Multistage pancyclicity game]\label{thm:PAN.multi}
If $b$ is subpolynomial in $n$, then
$$
\tau(\mathcal{PAN}_{n},b) = \left( \frac{1}{2} + o(1) \right) \log_{b+1}(n)\, .
$$
\end{thm}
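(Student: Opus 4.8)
The plan is to prove the two bounds separately, the upper bound by a reduction to the multistage triangle game and the lower bound by having Maker maintain a pseudorandom graph through the stages.

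\textbf{Upper bound.} Since every pancyclic graph on $[n]$ contains a triangle, at the end of any stage $i$ the graph $\mathcal{X}_i$ of Maker's edges is pancyclic only if it contains a copy of $K_3$; equivalently, $\cF_i\neq\varnothing$ in the multistage $\mathcal{PAN}_n$-game implies $\cF_i\neq\varnothing$ in the multistage $\mathcal{H}_{K_3,n}$-game played with the same moves. As the boards $\mathcal{X}_0\supseteq\mathcal{X}_1\supseteq\cdots$ produced during a play depend only on the players' moves and not on the target hypergraph, any Maker strategy lasting $s$ stages in the multistage $\mathcal{PAN}_n$-game lasts $s$ stages in the multistage $\mathcal{H}_{K_3,n}$-game as well. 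Hence $\tau(\mathcal{PAN}_n,b)\le\tau(\mathcal{H}_{K_3,n},b)$, and since $m_2(K_3)=2$, Theorem~\ref{thm:H-game.multi} gives $\tau(\mathcal{PAN}_n,b)\le(\tfrac12+o(1))\logb(n)$.

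\textbf{Lower bound.} Fix $\eps>0$ and put $s=\lfloor(\tfrac12-\eps)\logb(n)\rfloor$. Maker's strategy should maintain the invariant that after stage $i$ her graph $\mathcal{X}_i$ contains a spanning subgraph $H_i$ that is ``$G(n,p_i)$-pseudorandom'' with $p_i\ge(1-o(1))(b+1)^{-i}$: concretely $H_i$ has minimum degree $(1-o(1))p_in$ and discrepancy $\big|e_{H_i}(A,B)-p_i|A||B|\big|\le\mu\sqrt{p_in\,|A||B|}$ for all disjoint $A,B$, where $\mu=\mathrm{polylog}(n)$ is a fixed bound held throughout (plus, if needed, bounded co-degrees). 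The base case is $H_0=K_n$, and the engine would be a one-stage lemma: if the board $H$ is pseudorandom in this sense with density $p$ and $pn\gg\mathrm{polylog}(n)$, then Maker has a strategy in the $(1:b)$ game on $H$ whose resulting edge set contains a pseudorandom graph $H'$ with the same $\mu$ and density $p'\ge(1-o(1))\tfrac{p}{b+1}$. Iterating this $s$ times -- using that $b$ being subpolynomial forces $s=O(\log n)$ and that each per-stage loss can be kept to $1-n^{-\Omega(1)}$, so that the compounded loss is still $1-o(1)$ -- Maker ends with $H_s$ of density $p_s\ge(1-o(1))(b+1)^{-s}=n^{-1/2+\eps-o(1)}\gg n^{-1/2}$. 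I would then invoke that every pseudorandom graph of this density is pancyclic: it has minimum degree $\gg\sqrt n$ and is a strong enough expander to be Hamiltonian via P\'osa's rotation--extension technique, it contains each fixed short cycle $C_\ell$ since its density exceeds the appearance threshold $n^{-1/m_2(C_\ell)}\ge n^{-1/2}$ in pseudorandom graphs, and the remaining cycle lengths arise from chords of a Hamilton cycle, of which the discrepancy condition provides many in each ``gap''. Therefore $E(H_i)\in\cF_i$ for every $0\le i\le s$, so the game genuinely reaches stage $s$ with $\cF_s\neq\varnothing$; this gives $\tau(\mathcal{PAN}_n,b)\ge(\tfrac12-\eps)\logb(n)-O(1)$, and letting $\eps\to0$ finishes the lower bound. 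A first step would be to check how much of this pseudorandomness bookkeeping, and perhaps even the one-stage lemma itself, can be lifted directly from the proofs of Theorems~\ref{thm:HAM.multi} and~\ref{thm:H-game.multi}.

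\textbf{The main obstacle.} The crux is the one-stage lemma, and specifically the accuracy of the maintenance. Maker cannot afford a constant-factor loss in density (nor a polylogarithmic blow-up in $\mu$) per stage, because compounded over the $\Theta(\logb(n))$ stages -- which is $\Theta(\log n)$ when $b$ is bounded -- this would amount to an $n^{\Omega(1)}$ error and wipe out the $n^{\eps}$ margin by which $p_s$ beats $n^{-1/2}$. So the strategy has to be $(1+o(1))$-efficient uniformly over the entire subpolynomial range of $b$, from bounded $b$ up to $b=n^{o(1)}$. The natural route is a potential-function argument in the spirit of Erd\H{o}s--Selfridge and Beck for a ``discrepancy game'', in which Maker, using a single stream of moves, simultaneously keeps all her vertex degrees and all her pair-densities within the prescribed tolerances of their expected $\tfrac1{b+1}$-shares; the fact that the contribution of the host's own discrepancy to Maker's discrepancy is already smaller by a factor $\sqrt{b+1}$ is what should make a fixed $\mu$ sustainable across stages. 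The $\eps$ of slack in the target number of stages is exactly what absorbs the small unavoidable inefficiencies, and verifying that the output of each stage is pseudorandom in precisely the form needed to restart the lemma is the remaining, more routine, ingredient.
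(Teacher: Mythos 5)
Your upper bound is exactly the paper's argument: reduce to the multistage $K_3$-game and invoke Theorem~\ref{thm:H-game.multi}, noting $m_2(K_3)=2$. That part is fine.

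The lower bound, however, takes a much more demanding route than the paper, and it has a genuine gap precisely where you flag the ``main obstacle.'' Your plan requires a one-stage lemma that lets Maker carry an edge-discrepancy bound of the form $\bigl|e_{H_i}(A,B)-p_i|A||B|\bigr|\le\mu\sqrt{p_in|A||B|}$ for \emph{all} pairs $A,B$, with a fixed $\mu$ and only a $(1+o(1))$ density loss per stage, compounded over $\Theta(\log n)$ stages. Nothing in the paper supplies such a lemma: Lemma~\ref{lem:multistagediscrepancy} only guarantees that Maker claims at least one element in each member of a \emph{fixed, finite} family of winning sets, not that her graph retains jumbledness/pseudorandomness with sub-constant loss per stage. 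Whether such a one-stage maintenance lemma holds uniformly from $b=O(1)$ up to $b=n^{o(1)}$ is genuinely unclear, and even if it did, one would still have to verify the pancyclicity of the final pseudorandom graph (short cycles, Hamiltonicity, intermediate lengths via chords), which is nontrivial bookkeeping on top.

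The paper avoids all of this by choosing a much weaker sufficient condition for pancyclicity. It uses a corollary of Keevash--Sudakov~\cite{keevash2010pancyclicity}: if every independent set of $G$ has size at most $\sqrt n$ and $G$ is $600\sqrt n$-vertex-connected, then $G$ is pancyclic. Both conditions translate into the form Lemma~\ref{lem:multistagediscrepancy} can handle. Maker plays the multistage discrepancy strategy on the family $\mathcal F=\mathcal F_1\cup\mathcal F_2\cup\mathcal F_3$, where $\mathcal F_1$ consists of the edge sets $E_{K_n}(A)$ with $|A|=\sqrt n$ (killing all independent sets of size $\sqrt n$), $\mathcal F_2$ consists of the bipartite sets $E_{K_n}(\{v\},B)$ with $|B|=n-700\sqrt n$ (forcing every vertex to have degree over $700\sqrt n$, so no small side in a potential separator can be cut off), and $\mathcal F_3$ consists of the bipartite sets $E_{K_n}(A,B)$ with $|A|=|B|=\sqrt n$ (forcing an edge across any balanced separator), so any separator must have size at least $600\sqrt n$. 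The minimum winning-set sizes are all $\Theta(n)$ and $\ln|\mathcal F_j|=O(\sqrt n\log n)$, so $k_j/\ln|\mathcal F_j|=\Theta(\sqrt n/\log n)$, and Lemma~\ref{lem:multistagediscrepancy} yields $(1-o(1))\logb(\sqrt n/\mathrm{polylog})=(\tfrac12-o(1))\logb(n)$ stages directly. If you want to salvage your outline, the cleanest fix is to replace the pseudorandomness invariant with exactly this weaker pair of conditions and route everything through Lemma~\ref{lem:multistagediscrepancy}.
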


Observe that in all our results, it happens that the threshold $\tau(\mathcal{H},b)$ is asymptotically the same as
$\log_{b+1}(b_{\mathcal{H}})$, where $b_{\mathcal{H}}$ denotes the threshold bias discussed earlier. We believe that this is not a coincidence.

\smallskip

Using Lehman's Theorem~\cite{lehman}, we can even provide a precise result for the $(1:1)$ multistage connectivity game, showing that Maker can do slightly better than what the above random graph argument suggests.

\begin{thm}[Unbiased connectivity game]\label{thm:Conn.multi_unbiased}
$$
\tau(\mathcal{C}_{n},1) = \lfloor \log_2(n) - 1 \rfloor\, .
$$
\end{thm}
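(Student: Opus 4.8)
\textbf{Lower bound: $\tau(\mathcal{C}_n,1)\ge\lfloor\log_2 n-1\rfloor$.} The plan is to have Maker maintain, stage after stage, a large family of edge-disjoint spanning trees inside her current graph, whose size shrinks by a factor of at most two per stage. The engine is the following consequence of Lehman's theorem~\cite{lehman}: \emph{if a graph $G$ contains $2k$ edge-disjoint spanning trees, then Maker, moving first in the $(1:1)$ connectivity game on $G$, has a strategy guaranteeing that her final graph contains $k$ edge-disjoint spanning trees.} To see this, fix edge-disjoint spanning trees $T_1,\dots,T_{2k}$ of $G$, pair them as $U_j=T_{2j-1}\cup T_{2j}$, and recall that by Lehman's theorem Maker, as the \emph{second} player, can occupy a spanning tree inside the game played on $E(U_j)$. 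Maker plays these $k$ games in parallel: she answers every Breaker move inside some $U_j$ according to her strategy there, and treats her opening move, as well as any move she is forced to make while Breaker plays outside $\bigcup_j E(U_j)$, as a harmless extra move (a second-player strategy only benefits from the responder holding more edges). At the end her graph contains a spanning tree inside each $U_j$, and these are pairwise edge-disjoint. Since $K_n$ contains $\lfloor n/2\rfloor$ edge-disjoint spanning trees by the Nash--Williams--Tutte theorem, iterating the lemma lets Maker keep $\lfloor n/2^{i+1}\rfloor$ edge-disjoint spanning trees in $\mathcal{X}_i$ at every stage $i$; in particular $\mathcal{F}_i\ne\varnothing$ whenever $\lfloor n/2^{i}\rfloor\ge 2$, that is, for all $i\le\lfloor\log_2 n-1\rfloor$.

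\textbf{Upper bound: $\tau(\mathcal{C}_n,1)\le\lfloor\log_2 n-1\rfloor$.} Here I would use the first-player form of Lehman's theorem: if Maker, moving first, wins the connectivity game on a graph $G$ on $n$ vertices, then $G$ contains two spanning trees sharing at most one edge (equivalently, $G/e$ has two edge-disjoint spanning trees for some $e\in E(G)$), so $e(G)\ge 2n-3$. Since in a $(1:1)$ game Maker claims exactly $\lceil N/2\rceil$ of any board of size $N$, we have $e(\mathcal{X}_i)=\bigl\lceil\binom n2/2^{i}\bigr\rceil$. Writing $s=\lfloor\log_2 n-1\rfloor$: if Maker completes stage $s+1$ then $\bigl\lceil\binom n2/2^{s}\bigr\rceil\ge 2n-3$, but a short computation (with $n=2^{s+1}+r$, $0\le r<2^{s+1}$) shows $\binom n2/2^{s}<2n-3$ always, and that $e(\mathcal{X}_s)=2n-3$ can occur only when $n=2^{m}-1$. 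For every other $n$ this already gives $\tau(\mathcal{C}_n,1)\le s$, matching the lower bound.

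\textbf{The borderline case and main obstacle.} When $n=2^{m}-1$ the edge count is exactly tight, so if Maker could complete stage $s+1$ then $\mathcal{X}_s$ would have to be \emph{precisely} a union of two spanning trees sharing exactly one edge. Such a graph is connected, has at most one bridge---only the shared edge can be a bridge, since deleting any other edge leaves one of the two trees intact---and, more generally, every partition of its vertices into $r$ classes is crossed by at least $2r-3$ of its edges. It therefore suffices to equip Breaker with a strategy in stage $s$ (played on $\mathcal{X}_{s-1}$) that forces $\mathcal{X}_s$ to violate one of these conditions---for instance, to be disconnected, or to contain two independent bridges. Constructing and analysing such a Breaker strategy---plausibly a pairing-type strategy that reserves a small cut for Breaker while Maker is kept busy holding the rest of the graph together---is the delicate heart of the argument; everything else is the bookkeeping above. (Throughout we may assume $n$ is large, the few small values being settled by direct inspection.)
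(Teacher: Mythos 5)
Your lower bound is essentially identical to the paper's: $K_n$ contains $\lfloor n/2\rfloor$ edge-disjoint spanning trees, Maker pairs them up and runs the second-player Lehman strategy (Theorem~\ref{thm:lehman}) on each pair, halving the count of edge-disjoint spanning trees per stage, with Maker's first move and any moves responding to Breaker playing outside the pairs being harmless extras. The paper's proof is exactly this argument, so nothing to report there.

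Your upper bound is where things diverge, and in a way that is worth flagging. The paper's upper bound is a one-line edge count: ``after $\log_2(n)-1$ rounds, the board has less than $n-1$ edges.'' You compute the count precisely and notice, correctly, that this is not strict when $n=2^m-1$: after $\lfloor\log_2 n\rfloor$ stages Maker holds exactly $\bigl\lceil\binom n2 / 2^{\lfloor\log_2 n\rfloor}\bigr\rceil = n-1$ edges, and equivalently $e(\mathcal{X}_s)=2n-3$, which is exactly the threshold at which the first-player form of Lehman's theorem permits Maker to win one further stage. So the paper's stated upper-bound argument does not actually close the case $n=2^m-1$ (and in fact for $n=3$ the theorem's formula fails outright: Maker's two edges after stage one always form a spanning tree of $K_3$, giving $\tau(\mathcal{C}_3,1)=1$ rather than $\lfloor\log_2 3-1\rfloor=0$). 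Your reformulation through the condition $e(\mathcal{X}_s)\ge 2n-3$ thus exposes a real gap in the paper's reasoning. However, your own argument also stops at precisely this point: you explicitly defer ``constructing and analysing such a Breaker strategy'' for $n=2^m-1$, and that deferral is a genuine missing piece, not bookkeeping. As written, both proofs establish the theorem only when $n$ is not of the form $2^m-1$; to finish you would either need the claimed Breaker strategy (and a check that it exists for all large such $n$), or else to show that Maker can in fact squeeze out one extra stage when $n=2^m-1$ (as she can at $n=3$), in which case the theorem's formula itself needs amending.
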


\smallskip

For most of our Maker strategies, we will prove and apply a winning criterion (Lemma~\ref{lem:multistagediscrepancy}) that enables Maker to occupy at least one element of each member of a suitable collection of subsets of the initial board for several stages. Its proof is based on the potential function method, and we believe that the statement itself and the steps towards its proof (Lemma~\ref{lem:alphafraction} and Lemma~\ref{lem:biaseddiscrepany}) may be of independent interest.

\medskip

%
%

\subsection{Notation.} We use standard graph theory notation throughout. 
For a given graph $G$, we denote its vertex set and edge set by $V(G)$ and $E(G)$, respectively, and we set $v(G) = |V(G)|$ and $e(G) = |E(G)|$.\\
Given two vertices, $x$ and $y$, an edge is denoted by $xy$. If an edge is unclaimed by any of the players we call it \textit{free}. Given a subset $A\subseteq V(G)$, by $G[A]$ we denote the \textit{induced} subgraph of $G$ with vertex set $A$, and we further set $E_G(A)=E(G[A])$. For disjoint sets $A,B \subseteq V(G)$, we let $E_G(A,B)$ denote the set of edges of $G$ with one endpoint in $A$ and one endpoint in $B$. Given graphs $H, F$, we write $H\subseteq F$ to denote that $H$ is contained in $F$, meaning that $V(H)\subseteq V(F)$ and $E(H)\subseteq E(F)$. 
Given any set $S\subseteq V(G)$, we denote the (joint) external neighbourhood of $S$ by $N_G(S)=\{u \in V(G)\setminus S : ux \in E(G), x\in S\}$.
The density of a graph $G$ is defined as $d(G)=\frac{e(G)}{v(G)}$, while its \textit{maximum density} is $m(G)=\max_{H\subseteq G \atop v(H) \geq 1} d(H)$. If $G$ has at least three vertices, we define its $2$-density as $d_2(G)=\frac{e(G)-1}{v(G)-2}$, and its \textit{maximum 2-density} as $m_2(G)=\max_{H\subseteq G \atop v(H)\geq 3} d_2(H)$.\\
We denote the natural logarithm of $n$ by $\ln(n)$. We say that a function $f(n)$ is \textit{subpolynomial} in $n$ if $f(n) = o(n^\eps)$ for all $\eps > 0$.

\medskip

\subsection{Organization of the paper.}
The rest of the paper is organized as follows. In Section~\ref{sec:criteria} we collect and prove some winning criteria that will be used in the proofs of most of our theorems. Then in Section~\ref{sec:HAM.multi} we consider the Hamilton cycle game and prove Theorem~\ref{thm:HAM.multi} and Corollary~\ref{cor:Con.multi}, in Section~\ref{sec:kCol.multi} we discuss the game $\mathcal{COL}_{n,k}$ and prove Theorem~\ref{thm:kCol.multi}, in Section~\ref{sec:H-game.multi} we study the $H$-game and prove Theorem~\ref{thm:H-game.multi}, in Section~\ref{sec:PAN.multi} we discuss the pancyclicity game and prove Theorem~\ref{thm:PAN.multi}, and in 
Section~\ref{sec:Conn.multi_unbiased} we discuss the unbiased connectivity game and prove Theorem~\ref{thm:Conn.multi_unbiased}.
Finally, in Section~\ref{sec:concluding}
we provide some concluding remarks and pose several open problems.

\bigskip

%
%
%

\section{Winning Criteria}\label{sec:criteria}

This section deals with the two winning criteria (Theorem~\ref{beck_variant} and Lemma~\ref{lem:multistagediscrepancy}) that we will often use in the proofs of Breaker's and Maker's strategies, respectively. 
The following theorem shows that, under a certain Breaker's strategy, we can give an upper bound on the number of winning sets that Maker can completely occupy during a Maker-Breaker game.
This is a variant of Beck's Criterion~\cite{beck85Rand} and its proof, which can be found in \cite{bednarska2000biased}, follows directly from the proof of that criterion.

\begin{thm}[Lemma 5 in~\cite{bednarska2000biased}]\label{beck_variant}
Let an integer $b \ge 1$ and a hypergraph $\mathcal{H}=(\mathcal{X},\mathcal{F})$ be given.
Then, in the $(1:b)$ Maker-Breaker game on $\mathcal{H}$, Breaker has a strategy which ensures that Maker occupies no more than
$$
\sum_{F\in\mathcal{F}} (1+b)^{-|F|+1} 
$$
winning sets $F\in\mathcal{F}$ completely.
\end{thm}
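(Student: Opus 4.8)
The plan is to adapt the potential-function proof of the classical Erd\H{o}s--Selfridge/Beck criterion to the biased setting while keeping track of the weight accumulated by the winning sets that Maker completes. Throughout a play of the game, for a position in which Maker has claimed the set $M$ and Breaker has claimed the set $B$, call a winning set $F\in\mathcal F$ \emph{alive} if $F\cap B=\varnothing$, and assign it the weight $w(F)=(1+b)^{-|F\cap\mathrm{Free}|}$, where $\mathrm{Free}=\mathcal X\setminus(M\cup B)$ is the set of currently unclaimed elements; dead sets get weight $0$. Let $\Phi=\sum_{F\in\mathcal F}w(F)$. Its starting value is $\Phi_{\mathrm{init}}=\sum_{F\in\mathcal F}(1+b)^{-|F|}$, and once the board has been fully partitioned every alive set $F$ has $|F\cap\mathrm{Free}|=0$, hence weight $1$, so the final value $\Phi_{\mathrm{fin}}$ equals exactly the number of winning sets Maker has occupied completely. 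It therefore suffices to produce a Breaker strategy guaranteeing $\Phi_{\mathrm{fin}}\le(1+b)\,\Phi_{\mathrm{init}}$.

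For an unclaimed element $z$ define its \emph{danger} as $d(z)=\sum_{F\ni z,\ F\text{ alive}}w(F)$, and let Breaker use the natural greedy strategy: on each of his turns he claims $b$ elements one at a time, each time picking a currently unclaimed element of maximum danger and then updating the position. I would first record two elementary facts. (i) When Maker claims an unclaimed element $x$, the weight of every alive set through $x$ is multiplied by $1+b$ and all other weights are unchanged, so $\Phi$ increases by exactly $b\,d(x)$, with $d(x)$ computed just before the move. (ii) When Breaker claims an unclaimed element $y$, the alive sets through $y$ die while all other alive sets keep their weight (their intersection with $\mathrm{Free}$ is unaffected), so $\Phi$ decreases by exactly $d(y)$; moreover deleting an element from $\mathrm{Free}$ can only lower every danger, so along Breaker's $b$ successive picks in one turn the attained maxima satisfy $d^{(1)}\ge d^{(2)}\ge\dots\ge d^{(b)}$, whence the total decrease in that turn is $\sum_{i=1}^{b}d^{(i)}\ge b\,d^{(b)}$, which is at least $b$ times the maximum danger of the position left at the end of the turn.

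The key step is an amortisation argument: pair Breaker's $k$-th turn with Maker's $(k{+}1)$-st move. By (i) and (ii), Breaker's $k$-th turn lowers $\Phi$ by at least $b$ times the maximum danger of the position it leaves, while Maker's next move raises $\Phi$ by $b$ times the danger of one element of that same position, hence by at most $b$ times that maximum danger; so each such pair yields a net non-positive change. The only unpaired moves are Maker's first move and (possibly) Breaker's last turn. Breaker's last turn only decreases $\Phi$; Maker's first move increases it by $b\,d(x_1)\le b\,\Phi_{\mathrm{init}}$, since the danger of any element is at most $\sum_{F}w(F)=\Phi_{\mathrm{init}}$. Combining, $\Phi_{\mathrm{fin}}\le(1+b)\,\Phi_{\mathrm{init}}=\sum_{F\in\mathcal F}(1+b)^{-|F|+1}$, as claimed.

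The main obstacle---and the reason for the extra factor $1+b$ rather than the bare $\sum_F(1+b)^{-|F|}$---is managing the asymmetry between Maker's single claim and Breaker's block of $b$ claims: one must verify that the greedy block of $b$ Breaker picks really offsets the full $b\,d(x)$ that a single Maker move can contribute, which is exactly what forces pairing a Breaker turn with the \emph{following} Maker move (so that both are measured against the same maximum danger) and forces leaving Maker's opening move unpaired. A minor point to check is that every paired Breaker turn consists of exactly $b$ picks --- a turn with fewer than $b$ available elements must be the last one and is left unpaired --- and that the identity $\Phi_{\mathrm{fin}}=\#\{F\in\mathcal F:\ F\subseteq M\}$ indeed holds once the board is exhausted.
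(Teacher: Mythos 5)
Your proof is correct and is essentially the standard potential-function argument underlying Beck's biased Erd\H{o}s--Selfridge criterion, which is exactly what the paper points to (it cites Bednarska--\L{}uczak rather than reproducing the proof). The weight $w(F)=(1+b)^{-|F\cap\mathrm{Free}|}$ for alive sets, the greedy max-danger Breaker strategy, and the amortisation pairing Breaker's $k$-th turn with Maker's $(k{+}1)$-st move all match the cited approach, and your handling of the unpaired opening move and the possibly short final Breaker turn is what produces the factor $1+b$ in the stated bound.
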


We now discuss Maker's strategy.
It might not be clear at this point, but it will often be the case that, while playing a $(1:b)$ multistage Maker-Breaker game on $K_n$, Maker wants to claim at least one edge in each member of a suitable family of edge sets of $K_n$.
We provide a criterion under which Maker can achieve such a goal in Lemma~\ref{lem:multistagediscrepancy}. 
The methods used in the proof of Lemma~\ref{lem:multistagediscrepancy} are similar to those in Chapters 17 and 20 of~\cite{beck2008combinatorial}.
However the results in~\cite{beck2008combinatorial} only deal with uniform hypergraphs and a single stage game, and thus are not applicable in our setting, as we work with non-uniform hypergraphs and multistage games.

As a first step towards proving Lemma~\ref{lem:multistagediscrepancy}, we need to generalise a criterion of Beck~\cite{beck1981van}, which ensures that Maker can get an $\alpha$-fraction of each winning set in a biased Maker-Breaker game.
We believe that this criterion could be of independent interest.

\begin{lemma}\label{lem:alphafraction}
Given a hypergraph $\mathcal{H} = (\mathcal{X}, \mathcal{F})$, a real $0 \leq \alpha \leq 1$, and an integer $b \geq 1$,
if there exists $\mu \in (0,1)$ s.t.
\[\sum_{F \in \mathcal{F}} \lambda_{\alpha,\mu,b}^{-|F|} < 1 \text{ with } \lambda_{\alpha,\mu,b} := (1+\mu)^{\frac{1-\alpha}{b}}(1-\mu)^{\alpha},\] 
then Maker has a strategy to claim $\alpha |F|$ elements of every winning set $F \in \mathcal{F}$ in a $(1:b)$ Maker-Breaker game.
\end{lemma}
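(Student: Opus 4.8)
The plan is to prove this by the potential (weight) function method, extending to biased games the unbiased criterion of Beck~\cite{beck1981van}, which is the case $b=1$. Fix $\mu\in(0,1)$ with $\sum_{F\in\mathcal{F}}\lambda_{\alpha,\mu,b}^{-|F|}<1$, write $\lambda:=\lambda_{\alpha,\mu,b}$ and $\beta:=(1+\mu)^{1/b}$, and observe the identity $\beta^{b}=1+\mu$, which is what will make the bookkeeping come out evenly. During the game, for $F\in\mathcal{F}$ let $b_F$ and $m_F$ be the current numbers of elements of $F$ claimed by Breaker and by Maker, and set
\[
\phi(F)\;:=\;(1+\mu)^{\frac{b_F-(1-\alpha)|F|}{b}}(1-\mu)^{\,m_F-\alpha|F|},\qquad\Phi\;:=\;\sum_{F\in\mathcal{F}}\phi(F).
\]
At the start $b_F=m_F=0$, so $\phi(F)=\lambda^{-|F|}$ and $\Phi<1$. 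Claiming a previously free element of $F$ multiplies $\phi(F)$ by $1-\mu<1$ if the claim is Maker's, and by $\beta>1$ if it is Breaker's.

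I would have Maker play the greedy strategy: on her turn she claims a free element $x$ maximising the weighted degree $d(x):=\sum_{F\ni x}\phi(F)$, the sum taken over winning sets that still contain the free element $x$. I claim this keeps $\Phi<1$ throughout, which I would check round by round, a round consisting of one Maker move followed by up to $b$ Breaker moves. Let $P$ be the largest weighted degree of a free element at the start of the round. Maker's move decreases $\Phi$ by exactly $\mu\,d(x)=\mu P$, and since it only shrinks weights, afterwards every free element still has weighted degree at most $P$. Now each Breaker move multiplies the weighted degree of every free element by at most $\beta$ (it multiplies $\phi(F)$ by $\beta$ on the sets through the claimed point and leaves the others unchanged), so after Breaker's first $j$ moves every free element has weighted degree at most $\beta^{j}P$; hence Breaker's $(j+1)$-th move increases $\Phi$ by at most $(\beta-1)\beta^{j}P$, and over the round Breaker increases $\Phi$ by at most $\sum_{j=0}^{b-1}(\beta-1)\beta^{j}P=(\beta^{b}-1)P=\mu P$. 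Therefore $\Phi$ does not increase over a round, and, the decrease coming before the increases, $\Phi$ never rises above its value at the round's start; since Maker moves first, induction yields $\Phi<1$ for the whole game.

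It remains to read off the conclusion. When the game ends, every element of the board is claimed, so $b_F=|F|-m_F$ for each $F$ and
\[
\phi(F)\;=\;\Bigl((1+\mu)^{1/b}(1-\mu)^{-1}\Bigr)^{\alpha|F|-m_F}.
\]
As $(1+\mu)^{1/b}(1-\mu)^{-1}>1$ while $\phi(F)\le\Phi<1$, the exponent must be negative, that is $m_F\ge\lceil\alpha|F|\rceil$. Hence Maker finishes with at least $\alpha|F|$ elements of every $F\in\mathcal{F}$.

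The crux, and the step I would be most careful about, is the bound on Breaker's total gain in a single round: a single Breaker move can inflate the weighted degrees of many free elements at once, so the worry is that these inflations compound. The resolution is precisely the exponent $1/b$ built into $\phi$, i.e.\ the identity $\beta^{b}=1+\mu$: the naive per-move estimate $(\beta-1)\beta^{j-1}P$ forms a geometric progression that sums to exactly $(\beta^{b}-1)P=\mu P$, which is just what Maker's one move has already subtracted. Everything else is routine bookkeeping, and specialising to $b=1$ recovers Beck's criterion.
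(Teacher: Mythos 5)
Your proof is correct and follows essentially the same route as the paper's: the same potential $\phi(F)=(1+\mu)^{(b_F-(1-\alpha)|F|)/b}(1-\mu)^{m_F-\alpha|F|}$, the same greedy strategy of maximising the weighted degree, and the same geometric-sum balancing of Maker's decrease $\mu P$ against Breaker's total increase $(\beta^b-1)P$. The only cosmetic difference is in reading off the conclusion: you wait until the end of the game and evaluate $\phi(F)$ directly, while the paper derives a contradiction the moment Breaker would claim $(1-\alpha)|F|$ elements of some $F$; both are equivalent.
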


\begin{proof}
Denote by $X_i$ the elements which Maker took in the first $i$ rounds. 
Denote by $Y_{i,j}$ with $0 \leq j \leq b$ the elements which Breaker took in the first $i-1$ rounds plus the first $j$ elements he took in the $i^\text{th}$ round. 
We define the following potential for each $F \in \mathcal{F}$, $0 \leq j \leq b$ and round $i$:
    \[\phi_{i,j}(F) := (1+\mu)^{\frac{1}{b}(|F \cap Y_{i,j}| - (1-\alpha)|F|)}(1-\mu)^{|F \cap X_i| - \alpha|F|}.\] 
We further define the potential of a vertex $v \in \mathcal{X}$ as \[\phi_{i,j}(v) := \sum_{\substack{F \in \mathcal{F} \\ v \in F}} \phi_{i,j}(F) \, ,\]
and we let 
\[\phi_{i,j} := \sum_{F \in \mathcal{F}} \phi_{i,j}(F)\] 
denote the total potential of the game immediately after Breaker took his
$j^\text{th}$ element of round $i$.
Note that $\phi_{i,0}(v)$ and $\phi_{i,0}$ then describe potentials immediately after Maker's $i^\text{th}$ turn. 
We further let \[\phi_{0,b} := \sum_{F \in \mathcal{F}} (1+\mu)^{-\frac{1}{b}(1-\alpha)|F|}(1-\mu)^{ - \alpha|F|} = \sum_{F \in \mathcal{F}} \lambda_{\alpha,\mu,b}^{-|F|}\] 
denote the potential at the start of the game, i.e.~when no elements have been claimed yet. 
By assumption $\phi_{0,b} < 1$.

Maker's strategy in round $i$ is to claim an element $v \in \mathcal{X}$ not yet claimed which maximises $\phi_{i-1,b}(v)$. 
We claim that, following this strategy, Maker can ensure that $\phi_{i,j}<1$ holds throughout the game.
Indeed, for $j \in [b]$, let $w_j$ be the $j^\text{th}$ element Breaker claimed in round $i$. 
Then, $\phi_{i-1,b}(v) \geq \phi_{i-1,b}(w_j) \geq \phi_{i,0}(w_j)$ 
where the first inequality holds by the maximality of $v$, and the second inequality holds since Maker's move can never increase the potential of any vertex.
Moreover, $\phi_{i,j}(w) \leq (1+\mu)^{1/b}\phi_{i,j-1}(w)$ for all $w \in \mathcal{X}$, as Breaker can increase the potential of any $F\in \mathcal{F}$
at most by a factor $(1+\mu)^{1/b}$ when he claims an element of $\mathcal{X}$.
In particular, this gives $\phi_{i,j-1}(w_j)\leq (1+\mu)^{(j-1)/b}\phi_{i,0}(w_j) \le (1+\mu)^{(j-1)/b}\phi_{i-1,b}(v)$ for all $j \in [b]$. Therefore, 
 \begin{align*}
        \phi_{i,b} &= \phi_{i-1,b} - \mu \phi_{i-1,b}(v) + ((1+\mu)^{1/b} - 1) \sum_{j=1}^b \phi_{i,j-1}(w_j) \\
                    &\leq \phi_{i-1,b} - \mu \phi_{i-1,b}(v) + ((1+\mu)^{1/b} - 1)\phi_{i-1,b}(v) \sum_{j=1}^b (1+\mu)^{(j-1)/b} \\
                    &= \phi_{i-1,b} - \mu \phi_{i-1,b}(v) + \mu \phi_{i-1,b}(v) = \phi_{i-1,b},
    \end{align*}
where the first line uses that $\mu \phi_{i-1,b}(v)$ describes the change of the total potential caused by Maker and $((1+\mu)^{1/b} - 1) \sum_{j=1}^b \phi_{i,j-1}(w_j)$ is the change caused by Breaker, and where we use the geometric sum to get the third line. 
Further, we have that $\phi_{i,j} \leq \phi_{i,b}$ for all $0 \leq j \leq b$, because the potential can only increase when Breaker claims an element. 
We thus conclude that $\phi_{i,j} \leq \phi_{i,b} \leq \phi_{0,b} < 1$ for all $i$ and $0 \leq j \leq b$. 
    
Now assume that there is some $F \in \mathcal{F}$ such that Breaker has claimed at least $(1-\alpha) |F|$ elements of $F$ after some round $i$. Note that this implies $\phi_{i,b}(F) \geq 1$, and therefore $\phi_{i,b} \geq 1$, which is a contradiction.
\end{proof}

Given any fixed winning set $F\in\mathcal{F}$ and any Breaker's bias $b$, it is clear that Maker can guarantee to claim roughly a $\frac{1}{b+1}$-fraction of all elements of $F$. Our next aim towards proving Lemma~\ref{lem:multistagediscrepancy} is to show that,
under certain conditions, Maker can simultaneously ensure to get almost a $\frac{1}{b+1}$-fraction 
from each winning set $F\in\mathcal{F}$. The following lemma is obtained from Lemma~\ref{lem:alphafraction}
by a suitable choice of the parameters $\alpha$ and $\mu$. Note that we allow the family $\cF$ of winning sets to be split into $s$ subfamilies $\cF_1,\dots,\cF_s$, as we often need to deal with winning sets of different kinds, for example while trying to create an expander in the Hamilton cycle game.

\begin{lemma}\label{lem:biaseddiscrepany}
    Let $\mathcal{X}$ be a set of size $n$ and let $\delta = \delta(n) \in (0,1)$. Let $b,s \geq 1$ be integers, $\mathcal{H} = (\mathcal{X}, \mathcal{F} = \mathcal{F}_1 \cup \dots \cup \mathcal{F}_s)$ a hypergraph, and $k_i = \min_{F \in \mathcal{F}_i} |F|$ for $i \in [s]$. If $k_i > 4\delta^{-2}\ln(s|\mathcal{F}_i|)$ for every $i \in [s]$, then Maker has a strategy to claim at least $(\frac{1}{b+1} - \delta)|F|$ elements of every winning set $F \in \mathcal{F}$ in a $(1:b)$ Maker-Breaker game. 
\end{lemma}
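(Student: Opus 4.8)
The plan is to obtain Lemma~\ref{lem:biaseddiscrepany} as a direct corollary of Lemma~\ref{lem:alphafraction}, applied with a well-chosen pair of parameters. Set $\alpha := \frac{1}{b+1} - \delta$. If $\alpha \le 0$ the assertion is vacuous (Maker is only asked to claim a nonpositive number of elements in each winning set), so we may assume $0 < \alpha < 1$. I would then take $\mu := \delta/2 \in (0,\tfrac12) \subset (0,1)$, and the whole proof reduces to verifying that, for these choices, $\lambda_{\alpha,\mu,b} = (1+\mu)^{(1-\alpha)/b}(1-\mu)^{\alpha}$ satisfies $\ln \lambda_{\alpha,\mu,b} \ge \delta^2/4$; once this is established, the hypothesis $k_i > 4\delta^{-2}\ln(s|\mathcal{F}_i|)$ is exactly what makes the sum in Lemma~\ref{lem:alphafraction} strictly less than $1$.

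To prove the bound on $\ln\lambda_{\alpha,\mu,b}$ I would write $\ln\lambda_{\alpha,\mu,b} = \frac{1-\alpha}{b}\ln(1+\mu) + \alpha\ln(1-\mu)$ and use the elementary estimates $\ln(1+\mu) \ge \mu - \mu^2/2$ (valid for all $\mu \ge 0$) and $\ln(1-\mu) \ge -\mu - \mu^2$ (valid for $0 \le \mu \le \tfrac12$), which give $\ln\lambda_{\alpha,\mu,b} \ge \mu\big(\tfrac{1-\alpha}{b}-\alpha\big) - \mu^2\big(\tfrac{1-\alpha}{2b}+\alpha\big)$. The key algebraic identity is $\frac{1-\alpha}{b} - \alpha = \frac{1-(b+1)\alpha}{b} = \frac{(b+1)\delta}{b} \ge \delta$, while on the other side $\frac{1-\alpha}{2b} + \alpha \le \frac{1}{2b} + \frac{1}{b+1} \le 1$ for every integer $b \ge 1$ (using $\alpha \le \frac{1}{b+1}$). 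Substituting $\mu = \delta/2$ then yields $\ln\lambda_{\alpha,\mu,b} \ge \frac{\delta^2}{2} - \frac{\delta^2}{4} = \frac{\delta^2}{4}$, as needed; in particular $\lambda_{\alpha,\mu,b} > 1$.

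With this inequality in hand, for each $i \in [s]$ the hypothesis gives $k_i\ln\lambda_{\alpha,\mu,b} \ge k_i\delta^2/4 > \ln(s|\mathcal{F}_i|)$, hence $|\mathcal{F}_i|\,\lambda_{\alpha,\mu,b}^{-k_i} < 1/s$. Since $|F| \ge k_i$ for every $F \in \mathcal{F}_i$ and $\lambda_{\alpha,\mu,b} > 1$, summing over the parts of $\mathcal{F}$ gives $\sum_{F \in \mathcal{F}}\lambda_{\alpha,\mu,b}^{-|F|} \le \sum_{i=1}^{s}\sum_{F \in \mathcal{F}_i}\lambda_{\alpha,\mu,b}^{-|F|} \le \sum_{i=1}^{s}|\mathcal{F}_i|\,\lambda_{\alpha,\mu,b}^{-k_i} < 1$. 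Thus the condition of Lemma~\ref{lem:alphafraction} holds with the witness $\mu$, and Lemma~\ref{lem:alphafraction} provides Maker with a strategy to claim $\alpha|F| = \big(\frac{1}{b+1}-\delta\big)|F|$ elements of every $F \in \mathcal{F}$ in the $(1:b)$ Maker-Breaker game, which is precisely the statement.

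The only genuinely delicate point is the bookkeeping in the second paragraph: one must be a little careful with the logarithmic estimates near $\mu = \tfrac12$ (i.e.\ $\delta$ close to $1$) and with the smallest case $b = 1$, where several of the ``$\le \tfrac12$'' bounds become equalities. However, the generous constant $4$ in the hypothesis leaves ample slack, so the plain choice $\mu = \delta/2$ works comfortably and no careful optimisation of the parameters is required.
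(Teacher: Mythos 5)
Your proof is correct and takes essentially the same route as the paper: reduce to Lemma~\ref{lem:alphafraction} with $\mu = \delta/2$, and use the hypothesis $k_i > 4\delta^{-2}\ln(s|\mathcal{F}_i|)$ to bound the sum by $1$. The paper arrives there by introducing an implicitly defined $\eps < \delta$ with $\lambda_{\frac{1}{b+1}-\eps,\mu,b} = e^{\mu^2}$, whereas you apply the lemma directly with $\alpha = \tfrac{1}{b+1}-\delta$ and verify $\ln\lambda_{\alpha,\mu,b} \geq \delta^2/4$ by elementary logarithm estimates; your variant is a touch more direct and equally valid.
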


\begin{proof}
    With $\mu := \frac{1}{2}\delta$, let $\eps > 0$ such that 
    \begin{equation}\label{eq:bd1}
        e^{\mu^2} 
        = (1+\mu)^{\frac{1}{b+1} + \frac{\eps}{b}}(1-\mu)^{\frac{1}{b+1} - \eps}.
    \end{equation}
	Note that the existence of such $\eps$ is given by the fact that 
	$f(x)=   (1+\mu)^{\frac{1}{b+1} + \frac{x}{b}}(1-\mu)^{\frac{1}{b+1} - x}$
	defines a continuous function on $\mathbb{R}$ with $f(0)=(1-\mu^2)^{\frac{1}{b+1}}<1$ and 
	$\lim_{x\rightarrow +\infty} f(x)=+\infty$. 
    Moreover, rearranging \eqref{eq:bd1} gives
    \begin{align*}
            \eps = \frac{\mu^2 - \frac{1}{b+1}\ln(1-\mu^2)}{\frac{1}{b}\ln(1+\mu) - \ln(1-\mu)}\, .
    \end{align*}
	Therefore, using that    
    $x - x^2 < \ln(1+x) < x$
    holds for all $x \neq 0, |x| \leq \frac{1}{2}$,
    we obtain
    \begin{align*}
    \eps  < \frac{\mu^2 + \frac{1}{b+1}(\mu^4 + \mu^2)}{\frac{1}{b}(\mu-\mu^2) + \mu}
          <\left(1 + \frac{2}{b+1}\right)\cdot\frac{\mu^2}{\mu}
          \leq 2\mu.
    \end{align*}
    We now apply Lemma~\ref{lem:alphafraction} with $\alpha = \frac{1}{b+1} - \eps$. Note that $\alpha > 0$ by the choice of $\eps$ and $\mu$ unless $\frac{1}{b+1} - \delta \leq 0$, but in that case, there is nothing to prove.
    By (\ref{eq:bd1}) and the choice of $\mu$, we get 
 \[\sum_{F \in \mathcal{F}} ((1+\mu)^{\frac{1}{b+1} + \frac{\eps}{b}}(1-\mu)^{\frac{1}{b+1} - \eps})^{-|F|} \leq \sum_{i=1}^s\sum_{F \in \mathcal{F}_i}e^{-\mu^2|F|} \leq \sum_{i=1}^s |\mathcal{F}_i| e^{-\frac{1}{4}\delta^2 k_i} < \sum_{i=1}^s|\mathcal{F}_i|e^{-\ln(s|\mathcal{F}_i|)} = \frac{s}{s} = 1.\]
    Thus, Maker can claim at least 
    \[\alpha|F| = \left(\frac{1}{b+1} - \eps\right)|F| > \left(\frac{1}{b+1} - 2\mu\right)|F| = \left(\frac{1}{b+1} - \delta\right)|F|\] 
    elements of every winning set $F \in \mathcal{F}$.
\end{proof}

Finally, we may apply Lemma~\ref{lem:biaseddiscrepany} repeatedly over several stages in order to obtain that,
even after some number of stages, Maker can make sure to get at least one element from every winning set.

\begin{lemma}\label{lem:multistagediscrepancy}
    Let $\mathcal{X}$ be a set of size $n$ and let $\gamma = \gamma(n) \in (0,1)$. Given integers $b,s \geq 1$, and a hypergraph 
    $\mathcal{H} = (\mathcal{X}, \mathcal{F}= \mathcal{F}_1 \cup \dots \cup \mathcal{F}_s)$ with $|\cF|>1$ 
    the following holds. 
    Let $k_j := \min_{F \in \mathcal{F}_j} |F|$ for every $j \in [s]$ and assume that
    \begin{equation}    \label{property:b} 
    \left( \frac{k_j}{\ln(s|\mathcal{F}_j|)} \right)^{\gamma/2}  \geq 20b
    \cdot \max\left\{1, \logb\left( \frac{k_j}{\ln(s|\mathcal{F}_j|)} \right) \right\}
    \end{equation}    
    for every $j\in [s]$.
    Then, in the $(1:b)$ Maker-Breaker multistage game on $\mathcal{H}$, Maker has a strategy to ensure that
    after $(1-\gamma)\min_{i \in [s]}\logb\left(\frac{k_i}{\ln(s|\mathcal{F}_i|)}\right)$ stages, she still claims at least one element in each $F\in\mathcal{F}$.
\end{lemma}

\begin{proof}
Set $t:= (1-\gamma)\min_{i \in [s]}\logb\left(\frac{k_i}{\ln(s|\mathcal{F}_i|)}\right)$ and 
$\delta := 4\max_{i \in [s]} \left(\frac{\ln(s|\mathcal{F}_i|)}{k_i}\right)^{\frac{\gamma}{2}}$, and observe that there exists some $j\in [s]$ with
$$
2b\delta t 
\leq
2b\cdot  4 \left(\frac{\ln(s|\mathcal{F}_j|)}{k_j}\right)^{\frac{\gamma}{2}} \cdot 
(1-\gamma) \logb\left(\frac{k_j}{\ln(s|\mathcal{F}_j|)}\right) \stackrel{~\eqref{property:b}}{<} \frac{1}{2}.
$$
At the end of Stage $i$, let $\mathcal{X}^i$ denote the set of all elements of $\mathcal{X}$ that belong to Maker, let $\mathcal{F}_j^i:=\left\{F\cap \mathcal{X}^i:~ F\in \mathcal{F}_j \right\}$ be the multiset of "left-overs" of the winning sets of $\mathcal{F}_j$, and set $k_{i,j} := \min_{F \in \mathcal{F}_j^i} |F|$ for each $j \in [s]$.  We aim to show that Maker can play in such a way that $k_{t,j}\geq 1$ for all $j\in [s]$.
To achieve that, in each stage $i\leq t$, we let Maker play according to the strategy of Lemma~\ref{lem:biaseddiscrepany} with input $\delta$ and hypergraph $\mathcal{H}^i:=(\mathcal{X}^i,\mathcal{F}^i :=\mathcal{F}_1^i\cup\ldots\cup\mathcal{F}_s^i)$.

Notice that, as long as the assumptions of Lemma~\ref{lem:biaseddiscrepany} hold at the beginning of some stage $i\leq t$, i.e. if $k_{i-1,j} > 4\delta^{-2} \ln(s|\mathcal{F}_j|)$ for all $j\in [s]$,
we obtain $k_{i,j} \geq k_{i-1,j}(\frac{1}{b+1} - \delta)$ and hence
$k_{i,j} \geq k_j(\frac{1}{b+1} - \delta)^i$ for every $j \in [s]$.
In particular, we then conclude
\begin{align*}
k_{i,j} & \geq k_j\left(\frac{1}{b+1} - \delta\right)^t 
	   = k_j \left( \frac{1}{b+1} \right)^t \cdot (1-(b+1)\delta)^t \\
	& \geq \left( \frac{k_j}{\ln(s|\mathcal{F}_j|)} \right)^{\gamma} \ln(s|\mathcal{F}_j|) \cdot (1-2b\delta t) \geq \frac{1}{2} \left( \frac{k_j}{\ln(s|\mathcal{F}_j|)} \right)^{\gamma} \ln(s|\mathcal{F}_j|) \, ,
\end{align*}
where the second inequality holds since $t\leq (1-\gamma) \logb\left(\frac{k_j}{\ln(s|\mathcal{F}_j|)}\right) $ and since $(1-x)^t\geq 1-xt$ as long as $x<1$, and the last inequality holds since $2b\delta t < \frac{1}{2}$.
This in turn gives that
\begin{align*}
k_{i,j}  \geq \frac{1}{2} \left( \frac{k_j}{\ln(s|\mathcal{F}_j|)} \right)^{\gamma} \ln(s|\mathcal{F}_j|)  
	 \geq 4\delta^{-2} \ln(s|\mathcal{F}_j|)
	= 4\delta^{-2} \ln(s|\mathcal{F}^i_j|) \, , 
\end{align*} 
where the second inequality holds by the definition of $\delta$. Now, this means that for the next stage $i+1$ we can again apply Lemma~\ref{lem:biaseddiscrepany}. Inductively it follows that Maker can play in such a way that 
$$
k_{i,j}  \geq \frac{1}{2} \left( \frac{k_j}{\ln(s|\mathcal{F}_j|)} \right)^{\gamma} \ln(s|\mathcal{F}_j|) 
$$
for all $i\leq t$ and $j\in [s]$. In particular, using~\eqref{property:b} again, we obtain
$k_{t,j} \geq 1$.
\end{proof}

\bigskip

%
%
%

\section{Hamilton cycle game}\label{sec:HAM.multi}

In this section we prove
Theorem~\ref{thm:HAM.multi}
and Corollary~\ref{cor:Con.multi}.
The upper bounds $\tau(\mathcal{C}_n,b),\tau(\mathcal{H}_n,b)\leq (1+o(1))\logb(n)$ are trivial, since after $(1+o(1))\logb(n)$ stages, the board has fewer than $n-1$ edges, and thus it can be neither Hamiltonian, nor connected. 
Thus it remains to provide a strategy for Maker to obtain a matching lower bound for the Hamilton cycle game, which immediately gives a lower bound for the connectivity game as well.  
We will use the following criterion for the existence of Hamilton cycles,
which is obtained by choosing $d=\ln\ln n$ in Theorem 1.1 in \cite{hamilton_expansion}.
We recall that, given a set $S \subseteq V(G)$, the notation $N_G(S)$ stands for the external neighbourhood of $S$.

\begin{thm}[Corollary from Theorem 1.1 in \cite{hamilton_expansion}]
\label{thm:hamilton_expansion}
For every large enough $n$ the following holds.
Let $G=(V,E)$ be a graph on $n$ vertices such that 
    \begin{enumerate}[label=\upshape(P\arabic*)]
        \item \label{hamilton-expansion} for every $S \subseteq V$, if $|S| \le \frac{n}{\ln n}$, then $|N_G(S)| \ge (\ln\ln n) |S|$;
        \item \label{hamilton-connectivity} there is an edge in $G$ between any two disjoint subsets $A, B  \subseteq V$ such that $|A|=|B| = \frac{n}{\ln n}$,
    \end{enumerate}
    then $G$ is Hamiltonian.
\end{thm}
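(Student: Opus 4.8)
\emph{Plan.} As the authors note, this statement is obtained by setting $d=\ln\ln n$ in Theorem~1.1 of~\cite{hamilton_expansion}, so one valid proof is simply to invoke that theorem. For completeness I would also sketch the underlying self-contained argument: \ref{hamilton-expansion}--\ref{hamilton-connectivity} form a standard ``expander'' hypothesis, and every such graph is Hamiltonian by P\'osa's rotation--extension technique. Throughout write $m:=\lceil n/\ln n\rceil$ and $d:=\ln\ln n$, so that the expansion property~\ref{hamilton-expansion} reads $|N(S)|\ge d|S|$ whenever $|S|\le m$, and observe $d\to\infty$.

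\emph{Connectivity and a long path.} First, $G$ is connected: if $C$ is a smallest component then $|C|\le n/2$, and either $|C|\le m$, in which case~\ref{hamilton-expansion} forces $|N(C)|\ge d|C|>0$ although $C$ is a component, or $|C|>m$, in which case $|V\setminus C|\ge n/2\ge m$ and any $m$-subsets of $C$ and of $V\setminus C$ violate~\ref{hamilton-connectivity}. Now let $P=x_0x_1\cdots x_p$ be a longest path in $G$; by maximality $N(x_0),N(x_p)\subseteq V(P)$. Fixing the endpoint $x_p$, let $\mathrm{END}$ be the set of all vertices that are the free endpoint of some longest path to $x_p$ reachable from $P$ by a sequence of P\'osa rotations. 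P\'osa's rotation lemma bounds the external neighbourhood of such a set by $|N(\mathrm{END})|<3|\mathrm{END}|$; combined with~\ref{hamilton-expansion} this forces $|\mathrm{END}|>m$, because if $|\mathrm{END}|\le m$ then $|N(\mathrm{END})|\ge d|\mathrm{END}|=(\ln\ln n)|\mathrm{END}|>3|\mathrm{END}|$ for $n$ large, a contradiction. Note the generous slack: the divergent factor $\ln\ln n$ swamps the absolute constant coming from the rotation lemma.

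\emph{Closing a Hamilton cycle.} For each $v\in\mathrm{END}$ fix a longest path $P_v$ from $v$ to $x_p$ and repeat the rotation argument at the $x_p$-end of $P_v$ with $v$ held fixed, obtaining a set $\mathrm{END}_v$ with $|\mathrm{END}_v|>m$ such that there is a longest path between $v$ and every $w\in\mathrm{END}_v$. A further application of~\ref{hamilton-connectivity} to two $m$-sized families of endpoints generated this way yields $v\in\mathrm{END}$ and $w\in\mathrm{END}_v$ with $vw\in E(G)$; adding this edge to a longest $v$--$w$ path produces a cycle $C$ on $p+1$ vertices. Finally, if $V(C)\ne V(G)$ then, since $G$ is connected, there is an edge $u'x$ with $u'\notin V(C)$ and $x\in V(C)$; deleting an edge of $C$ incident to $x$ and prepending $u'$ gives a path on $p+2$ vertices, contradicting the maximality of $P$. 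Hence $C$ is spanning and $G$ is Hamiltonian.

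\emph{Main obstacle.} The technical heart is P\'osa's rotation lemma, i.e.\ showing $|N(\mathrm{END})|=O(|\mathrm{END}|)$ for the rotation-reachable endpoint set, which requires a careful analysis of which interior vertices of the path can be adjacent to vertices of $\mathrm{END}$; and the cycle-closing step, where the rotations at the two ends must be coordinated so that~\ref{hamilton-connectivity} genuinely supplies an edge spanning a cycle of the full current length. Everything else --- connectivity, the extension step, and checking that the divergent expansion factor $\ln\ln n$ dominates the constants --- is routine. In particular, the sole role of the specific thresholds $n/\ln n$ and $\ln\ln n$ is to leave ample room in these comparisons, which is why the statement can be quoted as a clean corollary rather than needing its parameters optimised.
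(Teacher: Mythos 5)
Your main line — invoking Theorem~1.1 of~\cite{hamilton_expansion} with $d=\ln\ln n$ — is exactly the paper's proof, as the paper gives nothing beyond that one-line deduction. The supplementary P\'osa-rotation sketch is a sensible outline (and you rightly flag the cycle-closing coordination between the two $\mathrm{END}$-sets via~\ref{hamilton-connectivity} as the real technical content that the quoted theorem packages away), but it is not needed once the cited result is invoked.
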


\subsection{Proof of the lower bound in Theorem~\ref{thm:HAM.multi} (Maker's strategy)} 

Choose
$$\eps\in \left( \frac{4\ln\ln n}{\ln n}, \frac{5\ln\ln n}{\ln n} \right)
  ~~ \text{and} ~~ 
 \gamma := 2\cdot \frac{\ln(b) + \ln\logb(n)+5}{(1-\eps)\ln(n) -2\ln\ln(n) - \ln(2)}$$
such that 
$\eps^{-1}\in \mathbb{N}$, and assume $n$ to be large enough whenever needed.
We will prove that
$\tau(\mathcal{HAM}_n,b)\geq (1-\gamma-2\eps) \log_{b+1}(n)$.
Since $b$ is subpolynomial, then 
$\tau(\mathcal{HAM}_n,b)\geq (1-o(1)) \log_{b+1}(n)$, as required.
Moreover observe that when $b=1$, this gives
$\tau(\mathcal{HAM}_n,b)\geq \log_{2}(n)- \Theta(\log\log (n))$
which is very close to the random graph intuition as discussed in the introduction.

\medskip

In order to prove this bound, consider the family $\mathcal{F}:=\bigcup_{i=1}^s \mathcal{F}_i$ with $s:=\frac{2}{\eps}$ and
\begin{align*}
	\mathcal{F}_j & := \left\{E_{K_n}(A,B):~A,B \subseteq V(K_n), A\cap B=\varnothing,~ |A|=n^{\frac{(j-1)\eps}{2}},~ |B|=n-\frac{1}{2}n^{\frac{(j+1)\eps}{2}} \right\} ~ \text{ for }1\leq j \leq s-1, \\
	\mathcal{F}_s & := \left\{E_{K_n}(A,B):~A,B \subseteq V(K_n), A\cap B=\varnothing,~ |A|=|B|=\frac{n}{\ln n} \right\}\, , 
\end{align*}
and set $k_j:=\min\{|F|:~F\in\mathcal{F}_j\}$ for every $j\in [s]$. 

\medskip

Maker plays according to Lemma~\ref{lem:multistagediscrepancy} with $\gamma$ as defined above and hypergraph $\cH:=(E(K_n), \cF)$.
For that we need to verify that the condition~\eqref{property:b} holds.
For $j\in [s-1]$, we have
$\frac{1}{2}n^{1+\frac{(j-1)\eps}{2}} \leq k_j \leq n^{1+\frac{(j-1)\eps}{2}}$ and
$|\mathcal{F}_j| = \displaystyle \binom{n}{\frac{1}{2}n^{(j+1)\eps/2}}\binom{\frac{1}{2}n^{(j+1)\eps/2}}{n^{(j-1)\eps/2}}$.
Hence
$$|\mathcal{F}_j|\leq n^{n^{(j+1)\eps/2}} ~~ \text{and} ~~
|\mathcal{F}_j|\geq \binom{n}{\frac{1}{2}n^{(j+1)\eps/2}}
\geq \left(2n^{1-(j+1)\eps/2}\right)^{\frac{1}{2}n^{(j+1)\eps/2}}
\geq 2^{\frac{1}{2}n^{(j+1)\eps/2}}.$$ 
In particular, $
\frac{n^{1-\eps}}{2\ln^2 n} \leq \frac{k_j}{\ln (s|\mathcal{F}_j|)} \leq n,
$ 
from which we can conclude~\eqref{property:b} for $j\in [s-1]$ as follows:
\begin{align*}
\left( \frac{k_j}{\ln(s|\mathcal{F}_j|)} \right)^{\gamma/2} 
 & \geq \left( \frac{n^{1-\eps}}{2\ln^2n} \right)^{\gamma/2} 
  = \exp\left(\ln(b)+\ln\logb(n)+5\right) \\
  & > 20b \logb(n)
 \geq 20b \max\left\{1,\logb\left( \frac{k_j}{\ln(s|\mathcal{F}_j|)} \right)\right\}\, .
\end{align*}
Moreover, for $j=s$ we have $k_s=\frac{n^2}{\ln^2 n}$ and $|\mathcal{F}_s|=\frac{1}{2}\binom{n}{n/\ln n}\binom{n-n/\ln n}{n/\ln n}$, hence
$|\mathcal{F}_s|\leq 4^n$ and $|\mathcal{F}_s|\geq \exp \left(\frac{(2-o(1))n \ln\ln n}{\ln n}\right)$. In particular,
$
\frac{n}{2\ln^2n} \leq \frac{k_s}{\ln (s|\mathcal{F}_s|)} \leq n
$
from which we can conclude~\eqref{property:b} analogously.

\medskip

By Lemma~\ref{lem:multistagediscrepancy} we now obtain that Maker can ensure to claim at least one edge in each $F\in \mathcal{F}$ for at least
$$
(1-\gamma) \logb\left(\frac{n^{1-\eps}}{2\ln^2 n}\right)
\geq 
(1-\gamma) \logb\left(n^{1-2\eps}\right)
\geq
(1-\gamma-2\eps) \logb (n)
$$
stages, where we use the choice of $\eps$ in the first inequality.

\bigskip

It thus remains to check that if a graph $G$ contains an edge from every $F \in \cF$, then properties~\ref{hamilton-expansion} and \ref{hamilton-connectivity}
hold. Using only $\mathcal{F}_s$, we immediately see that \ref{hamilton-connectivity} holds.
For proving~\ref{hamilton-expansion}, 
let $S\subseteq V(K_n)$ with $|S|\leq \frac{n}{\ln n}$ be given,
and let $j\in [s-1]$ be largest such that $n^{(j-1)\eps/2}\leq |S|$. 
If $j\leq s-2$, then $|S|<n^{j\eps/2}$. Otherwise, if $j=s-1$, then $|S|\leq \frac{n}{\ln n}=\frac{n^{(j+1)\eps/2}}{\ln n}$. In any case, choose any subset $S'\subseteq S$
of size $n^{(j-1)\eps/2}$, and observe that $|N_{G}(S')|\geq \frac{1}{2}n^{(j+1)\eps/2}-n^{(j-1)\eps/2}$ since Maker claims an edge in every set $E_{K_n}(S',B)\in \mathcal{F}_j$ with $B\cap S'=\varnothing$ and $|B|=n-\frac{1}{2}n^{(j+1)\eps/2}$. In particular,
\begin{align*}
|N_{G}(S)|\geq |N_{G}(S')|-|N_{G}(S')\cap S| & \geq 
\frac{1}{2}n^{(j+1)\eps/2} -n^{(j-1)\eps/2} - \frac{n^{(j+1)\eps/2}}{\ln n} \\
& \geq \frac{1}{4}n^{(j+1)\eps/2}
> (\ln\ln n)|S|\, ,
\end{align*}
where we use the definition of $j$ and that $\eps>\frac{4\ln\ln n}{\ln n}$.
This proves~\ref{hamilton-expansion} and hence
the lower bound of Theorem~\ref{thm:HAM.multi}. \hfill $\Box$

\bigskip

%
%
%

\section{Non-k-colorability game}\label{sec:kCol.multi}

In this section we prove Theorem~\ref{thm:kCol.multi} and we split its proof into Maker's and Breaker's part.

\subsection{Proof of the lower bound in Theorem~\ref{thm:kCol.multi} (Maker's strategy)}

Let $n$ be large enough and consider the family
\[\mathcal{F} := \left\{E_{K_n}(A) : A \subseteq V(K_n), |A| = \left\lceil\frac{n}{k}\right\rceil\right\}.\] 
Maker plays according to Lemma~\ref{lem:multistagediscrepancy} with $s=1$,
\[\gamma := 2 \cdot \frac{\ln(b) + \ln \logb(n) + 5}{\ln(n) - 2\ln(k) - \ln(4) - \ln \ln(2)},\] and hypergraph $\mathcal{H}:=(E(K_n), \cF)$. Note that $\gamma = o(1)$ and $\gamma > 0$ since $b$ and $k$ are subpolynomial in $n$. In order to apply Lemma~\ref{lem:multistagediscrepancy}, we need to check that the condition~\eqref{property:b} holds.
Note that $|F| = \binom{\left\lceil\frac{n}{k}\right\rceil}{2}$ for every $F \in \mathcal{F}$ and, with $\ell:=\binom{\left\lceil\frac{n}{k}\right\rceil}{2}$, we have
\begin{align*}
    \left(\frac{\ell}{\ln(|\mathcal{F}|)}\right)^{\frac{\gamma}{2}} \geq \left(\frac{\frac{n^2}{4k^2}}{\ln(2^n)}\right)^{\frac{\gamma}{2}} & = \exp\left( \ln(b) + \ln(\logb(n)) + 5\right)\\
    & > 20b\logb(n) \geq 20b\max\left\{1,\logb\left(\frac{\ell}{\ln(|\mathcal{F}|)}\right)\right\},
\end{align*}
where we use $\frac{\ell}{\ln(|\cF|)} \le n$ in the last inequality.
Therefore,~\eqref{property:b} holds, and we conclude that Maker can claim an edge from each $F \in \mathcal{F}$ for at least 
\begin{align*}
    (1 - \gamma)\logb\left(\frac{\ell}{\ln(|\mathcal{F}|)}\right) &\geq (1 - \gamma)\logb\left(\frac{\frac{n^2}{4k^2}}{\ln(2^n)}\right)\\ &\geq \left(1 - \gamma - \frac{2\logb(2k)+\logb \ln(2) }{\logb(n)}\right)\logb(n) = (1 - o(1))\logb(n)
\end{align*} stages, where we use that $\gamma=o(1)$ and $k$ is subpolynomial in $n$.

It remains to show that if a graph $G$ contains an edge from every $F \in \cF$, then $G$ does not admit a proper $k$-coloring. Observe indeed that if there is a proper $k$-coloring of $G$, then, at least one color would be assigned to at least $\lceil \frac{n}{k} \rceil$ vertices. But $G$ contains an edge in every set of $\lceil \frac{n}{k} \rceil$ vertices, which is a contradiction. \hfill $\Box$

\subsection{Proof of the upper bound in Theorem~\ref{thm:kCol.multi} (Breaker's strategy)}

Breaker's strategy relies on multiple applications of the following lemma. 

\begin{lemma}[Corollary of Theorem 1.8 in~\cite{hefetz2007avoider}] \label{lem:acycle}
    Let $b \geq 1$ be an integer and $G$ be the union of at most $b+1$ edge-disjoint forests. Then Breaker wins the $(1:b)$ cycle game on $G$.
\end{lemma}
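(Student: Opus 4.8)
The plan is to reduce directly to Theorem~1.8 of~\cite{hefetz2007avoider}. First I would recall what Breaker is trying to achieve: Breaker wins the $(1:b)$ cycle game on $G$ exactly when he can guarantee that the edges claimed by Maker never contain a cycle, i.e.\ that Maker's graph stays a forest until the board $E(G)$ is used up. It is worth noting at the outset why one cannot hope for a self-contained Erd\H{o}s--Selfridge/Beck-type argument along the lines of Theorem~\ref{beck_variant}: even a moderately dense $G$ has exponentially many cycles (already that many Hamilton cycles), so $\sum_{C}(b+1)^{-|C|+1}$ over all cycles $C$ is astronomically larger than $1$. Any proof must instead exploit that the cycles of $G$ are the circuits of the graphic matroid, and that is precisely the structure Theorem~1.8 of~\cite{hefetz2007avoider} is built to handle.

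The only real step, then, is to verify that our hypothesis is the one that theorem wants. Saying that $G$ is the union of at most $b+1$ edge-disjoint forests is by definition the statement $\operatorname{arb}(G)\le b+1$, and by the Nash--Williams arboricity theorem this is equivalent to: every non-empty subgraph $H\subseteq G$ satisfies $e(H)\le(b+1)\bigl(v(H)-1\bigr)$. I would feed whichever of these equivalent forms matches the hypothesis of Theorem~1.8 of~\cite{hefetz2007avoider} into that theorem and read off a Breaker win in the $(1:b)$ cycle game on $G$. If the cited result is phrased more generally --- for an $(a\!:\!b)$ game, for arbitrary matroids, for a wider class of ``sparse'' target graphs, or in Avoider--Enforcer language --- then one extra routine step specialises it to $a=1$ and to the graphic matroid of $G$ (respectively, translates it to the Maker--Breaker cycle game).

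I expect the main obstacle to be exactly this bookkeeping: matching hypotheses, conclusion, and conventions of the cited theorem to the present statement rather than producing new mathematics. Two minor points deserve a sentence. On player order: a Breaker strategy that wins with Breaker moving second also wins with Breaker moving first, since Breaker may skip his opening move, and in our applications Maker moves first (so Breaker is the second player), which is the case most favourable to Maker in any event. On parity: the game ends once all of $E(G)$ has been claimed, so if $|E(G)|$ is not divisible by $b+1$ the final round is merely truncated in Breaker's favour and no divisibility issue arises. A fully self-contained proof would instead maintain, via an augmenting-path/matroid-union argument carried along the fixed forest decomposition, that Maker's claimed edges remain independent while Breaker always retains enough free edges to answer the latest threat --- but this is essentially a reproof of the cited theorem, which I would not redo here.
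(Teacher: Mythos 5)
The paper gives no proof of Lemma~\ref{lem:acycle} at all --- it is stated purely as a corollary of the cited Theorem~1.8 --- which is precisely the route you take: invoke the theorem, match the forest-decomposition (equivalently, Nash--Williams arboricity) hypothesis, and tidy up the player-order and parity conventions. Your additional remarks on why an Erd\H{o}s--Selfridge/Beck count cannot work here and on what a self-contained matroid-exchange proof would require are sensible context but go beyond what the paper itself says, since the paper simply defers to the citation.
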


Breaker wants to force the board to be a forest, so that it is bipartite and thus $k$-colorable for each $k \ge 2$.
We explain below a strategy to achieve this in $\logb(n) + 1$ stages. 
In each stage we partition the board $\mathcal{X}_i = F_{i,1} \cup \dots \cup F_{i,k_i}$ into $k_i$ edge-disjoint forests, where $k_i$ is the smallest number such that such a partition exists. Let $\mathcal{F}_i$ be the collection of forests from such a partition. We show that Breaker can ensure that $k_{i+1} \leq \lceil \frac{k_i}{b+1} \rceil$, by using the following strategy. In Stage $i$, we split the board $\mathcal{X}_i$ into $\lceil \frac{k_i}{b+1} \rceil$ edge-disjoint boards $G_j$, such that each forest $F \in \mathcal{F}_i$ is contained in exactly one board $G_j$, and each board contains at most $b+1$ edge-disjoint forests $F \in \mathcal{F}_i$. Whenever Maker plays on some board $G_j$, Breaker plays on the same board according to the strategy given by Lemma~\ref{lem:acycle}. Thus, at the end of Stage $i$, Maker has claimed an acyclic graph on each board $G_j$. We conclude that there is a partition of $\mathcal{X}_{i+1}$ into at most $\lceil \frac{k_i}{b+1} \rceil$ edge-disjoint forests as wanted.

Using the fact that $\mathcal{X}_0=E(K_n)$ and thus $k_0 = \lceil \frac{n}{2} \rceil$, we conclude that $k_i \leq 1$ for $i \geq \logb(n) + 1$, and thus the board becomes a forest.\hfill $\Box$

\bigskip

%
%
%

\section{$H$-game}\label{sec:H-game.multi}

In this section we prove Theorem~\ref{thm:H-game.multi} and, again, we discuss Maker's and Breaker's strategy separately.

\subsection{Proof of the lower bound in Theorem~\ref{thm:H-game.multi} (Maker's strategy)}

Given any graph $H$, we choose
$$
\gamma := 2 \cdot \frac{m_2(H)(\ln(b) + \ln\logb(n) +5)}{\ln(n) - 2m_2(H)\ln\ln(n)}\, ,
$$
and assume $n$ to be large enough whenever needed.
We will prove 
$\tau(\mathcal{H}_{H,n},b)\geq \left(\frac{1}{m_2(H)} - o(1)\right) \log_{b+1}(n)$, as required.
We use the method of hypergraph containers, developed by Balogh, Morris, and Samotij~\cite{balogh2015independent}, and, independently, Saxton and Thomason~\cite{saxton2015hypergraph}, which has been already used in the context of $H$-games, for the first time in~\cite{nenadov2016threshold}. 
Before stating it, we introduce some notation and, given a set $S$, we define $\cT_{k, s}(S)$ as the following family of $k$-tuples of subsets of $S$,
    $$ \cT_{k, s}(S) := \left\{ (S_1, \ldots, S_k) \,\Big| \,\,  S_i \subseteq S \; \text{for} \; 1 \leq i \leq k \; \text{and} \; \Big|\bigcup_{i=1}^k S_i \Big| \leq s\right\}. $$

\begin{thm}[Theorem $2.3$ in~\cite{saxton2015hypergraph}] \label{thm:containers}
    For any graph $H$ there exist constants $n_0, s \in \mathbb{N}$ and $\delta \in (0,1)$ such that the following is true. For every $n\ge n_0$  there exist $t=t(n)$, pairwise distinct tuples $T_1,\ldots,T_{t} \in \cT_{s, sn^{2 - 1/m_2(H)}}(E(K_n))$ and sets
    $C_1,\ldots,C_{t} \subseteq E(K_n)$, such that
    \begin{enumerate}
        \item[(C1)] each $C_i$ contains at most $(1 - \delta)\binom{n}{2}$ edges,
        \item[(C2)] for every $H$-free graph $G$ on $n$ vertices there exists $1\le i\le t$ such that $T_i\subseteq E(G) \subseteq C_i$, where $T_i\subseteq E(G)$ means that all sets contained in $T_i$ are subsets of $E(G)$.
    \end{enumerate}
\end{thm}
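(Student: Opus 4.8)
The plan is to derive this statement as the $H$-free instance of the hypergraph container method: I would apply the abstract container theorem of Saxton--Thomason (equivalently of Balogh, Morris and Samotij) to a suitable auxiliary hypergraph, and then use Erd\H{o}s--Simonovits supersaturation to control the sizes of the containers. Concretely, I would let $\mathcal{G}$ be the $e(H)$-uniform hypergraph with vertex set $E(K_n)$ whose hyperedges are the edge sets $E(H')$ of the copies $H'\cong H$ inside $K_n$; by construction a graph $G$ on $[n]$ is $H$-free precisely when $E(G)$ is an independent set of $\mathcal{G}$. Writing $N:=\binom{n}{2}$, one has $e(\mathcal{G})=\Theta(n^{v(H)})$ and $\Delta_1(\mathcal{G})=\Theta(n^{v(H)-2})=\Theta(e(\mathcal{G})/N)$.

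The analytic heart of the argument is a co-degree estimate. I would set $\tau:=C_0\,n^{-1/m_2(H)}$ for a suitable constant $C_0=C_0(H)$ and show that for every $1\le j\le e(H)$ the maximum $j$-degree obeys a balanced bound of the form $\Delta_j(\mathcal{G})\le \tau^{\,j-1}\,e(\mathcal{G})/N$, the constant being absorbed into $C_0$. Indeed, a $j$-set $D$ of edges of $K_n$ lies in no copy of $H$ unless $D$ is isomorphic to a subgraph of $H$, and if $D$ spans $v(D)$ vertices then the number of copies of $H$ containing $D$ is $O(n^{v(H)-v(D)})$, obtained by matching $D$ into $H$ in $O(1)$ ways and then embedding the remaining $v(H)-v(D)$ vertices. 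Hence $\Delta_j(\mathcal{G})/\Delta_1(\mathcal{G})=O(n^{2-v(D)})$ for the extremal such $D$, and this is at most $n^{-(j-1)/m_2(H)}$ exactly because $\frac{e(D)-1}{v(D)-2}\le m_2(H)$ whenever $v(D)\ge 3$, while the cases $v(D)\le 2$ force $j=1$ and are trivial. This is precisely where the \emph{maximum $2$-density} $m_2(H)$, rather than $m(H)$, enters.

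Feeding $\mathcal{G}$ and $\tau$ into the abstract container theorem would yield constants $s=s(H)$ and $\delta'=\delta'(H)>0$ and, for each independent set of $\mathcal{G}$, a fingerprint consisting of at most $s$ subsets $S_1,\dots,S_s$ of that independent set with $\bigl|\bigcup_i S_i\bigr|=O(\tau N)=O(n^{2-1/m_2(H)})$, together with a container $C=C(S_1,\dots,S_s)$ which contains the independent set and for which the number $e_{\mathcal{G}}(C)$ of hyperedges of $\mathcal{G}$ inside $C$ satisfies $e_{\mathcal{G}}(C)\le \delta' e(\mathcal{G})$. Padding each fingerprint with empty sets to exactly $s$ entries and enlarging $s$ (and $C_0$) if necessary turns each fingerprint into an element of $\cT_{s,\,sn^{2-1/m_2(H)}}(E(K_n))$; listing the pairwise distinct fingerprints as $T_1,\dots,T_t$ and their containers as $C_1,\dots,C_t$ gives the desired family, and $(C2)$ holds by construction, since an $H$-free $G$ with fingerprint $T_i$ satisfies $T_i\subseteq E(G)\subseteq C_i$. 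Property $(C1)$ then follows from supersaturation: I would fix $\delta\in\bigl(0,\,1-\pi(H)\bigr)$, where $\pi(H)=1-\tfrac{1}{\chi(H)-1}$ is the Tur\'an density of $H$, so that by Erd\H{o}s--Simonovits there is $\varepsilon=\varepsilon(H)>0$ with every $n$-vertex graph having at least $(1-\delta)\binom{n}{2}$ edges containing at least $\varepsilon n^{v(H)}$ copies of $H$; choosing $\delta'$ in the container step small enough that $\delta' e(\mathcal{G})<\varepsilon n^{v(H)}$, every container $C_i$ then has fewer than $\varepsilon n^{v(H)}$ copies of $H$, whence $|E(C_i)|<(1-\delta)\binom{n}{2}$ by contraposition.

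The main obstacle is the balanced degree estimate of the second paragraph: one must verify $\Delta_j(\mathcal{G})\le \tau^{j-1}e(\mathcal{G})/N$ for every $j\le e(H)$, which reduces to checking $\frac{e(D)-1}{v(D)-2}\le m_2(H)$ for all subgraphs $D\subseteq H$ on at least three vertices and separately handling the degenerate small cases, and to matching this with the exact hypotheses of whichever version of the container theorem one cites. The rest is bookkeeping, but the constants must be fixed in the right order: first $\delta$ below the Tur\'an gap $1-\pi(H)$, then $\varepsilon$ from supersaturation, then $\delta'$ small relative to $\varepsilon$, which determines $s$ through the container theorem, with $C_0$ pinned down by the degree count and $n_0$ taken large enough for the asymptotic estimates, the container theorem, and supersaturation to all apply.
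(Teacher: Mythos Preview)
The paper does not prove this theorem at all: it is quoted verbatim as Theorem~2.3 from Saxton--Thomason and used as a black box in the proof of the lower bound in Theorem~\ref{thm:H-game.multi}. There is therefore no ``paper's own proof'' to compare your proposal against.

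That said, what you outline is essentially the standard derivation of this graph-container statement from the abstract hypergraph container lemma (of Saxton--Thomason or Balogh--Morris--Samotij) together with Erd\H{o}s--Simonovits supersaturation, and your sketch is correct in all its essentials: the auxiliary $e(H)$-uniform hypergraph on $E(K_n)$, the co-degree computation that singles out $m_2(H)$ via $\frac{e(D)-1}{v(D)-2}\le m_2(H)$ for subgraphs $D\subseteq H$, the choice $\tau\asymp n^{-1/m_2(H)}$, and the use of supersaturation to turn ``few copies of $H$'' into ``at most $(1-\delta)\binom{n}{2}$ edges'' are exactly the right ingredients. If you were asked to supply a proof here, your plan would work; but in the context of this paper the statement is simply imported, not proved.
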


Fix the constants $s,\delta$ from the above theorem with input $H$. The theorem states that there exists a not too large collection of containers $\{C_i: i\in [t(n)]\}$ of graphs, each of which contains not too many edges and with the property that every $H$-free graph is a subgraph of one such container $C_i$.
Hence, in order to occupy a copy of $H$, it suffices for Maker
to claim an edge in each complement of a container, i.e. $E(K_n)\setminus C_i$. We therefore aim to apply Lemma~\ref{lem:multistagediscrepancy} with $s=1$ and family
$$
\mathcal{F} := \left\{E(K_n)\setminus C_i: i\in [t(n)] \right\}\, .
$$

We let $k:=\min\{|F|:~F\in\mathcal{F}\}$ and we check condition~\eqref{property:b} of Lemma~\ref{lem:multistagediscrepancy}. Observe that
$\delta\binom{n}{2}\leq k \leq \binom{n}{2}$
and
$$|\mathcal{F}| \leq \binom{\binom{n}{2}}{sn^{2 - 1/m_2(H)}} \cdot (2^s)^{sn^{2 - 1/m_2(H)}} < n^{2sn^{2 - 1/m_2(H)}}\, .
$$
In particular,
$\frac{n^{1/m_2(H)}}{\ln^2 n} \leq \frac{k}{\ln (|\mathcal{F}|)} \leq n^2$ and therefore
\begin{align*}
\left( \frac{k}{\ln (|\mathcal{F}|)} \right)^{\gamma/2}
& \geq 
\left( \frac{n^{1/m_2(H)}}{\ln^2 n } \right)^{\gamma/2}
=
\exp\left[\frac{\gamma}{2} \left( \frac{1}{m_2(H)}\ln(n) - 2\ln\ln(n) \right)\right] \\
& =
\exp\left(5 + \ln(b) + \ln\logb(n)\right) > 40b\logb(n) > 20b\logb\left( \frac{k}{\ln (|\mathcal{F}|)} \right)\, . 
\end{align*}

Therefore Maker can claim an edge from each $F \in \cF$ for at least
\begin{align*}
    (1 - \gamma)\logb\left(\frac{k}{\ln(|\mathcal{F}|)}\right) &\geq (1 - \gamma)\logb\left(\frac{n^{1/m_2(H)}}{\ln^2 n}\right)\\ &\geq \left(\frac{1-\gamma}{m_2(H)} - \frac{2\logb \ln(n) }{\logb(n)}\right)\logb(n) = \left(\frac{1}{m_2(H)} - o(1)\right)\logb(n)
\end{align*} stages, where we use that $\gamma = o(1)$ since $b$ is subpolynomial in $n$. Threfore the lower bound of Theorem~\ref{thm:H-game.multi} is proven. \hfill $\Box$

\medskip

\subsection{Proof of the upper bound in Theorem~\ref{thm:H-game.multi} (Breaker's strategy)}

We split Breaker's strategy in two phases.
In the first phase, which will occupy the main part of the game, Breaker ensures that the board
will not have many copies of $H$ clustered together.
Then afterwards, in the second phase, Breaker can consider each cluster separately and
destroy all remaining copies of $H$ in a tiny number of stages.
To make this approach more precise, we first introduce the necessary concepts, which were also used in~\cite{milos_tibor}
for analysing games on random graphs.

\begin{dfn}[$K$-collection]
    Let $G$ and $K$ be graphs. We define the auxiliary graph $G_K$ to be the graph with vertices corresponding to the copies of $K$ in $G$, and two vertices being adjacent if the corresponding copies of $K$ have at least two vertices in common. 
    Let $\cK=\{K_1, \dots, K_s\}$ be the family of copies of $K$ in $G$ corresponding to a connected component of $G_K$. 
    Then the subgraph of $G$ induced by $\bigcup_{i \in [s]} K_i$ is called a $K$-collection, and we denote its vertex set and its number of vertices by $V(\cK)=\bigcup_{i \in [s]} V(K_i)$ and $v(\cK)=|V(\cK)|$, respectively.
\end{dfn}

\begin{dfn}[$s$-bunch]
    Let $(K_1, \dots, K_s)$ be a sequence of copies of $K$.
    Then $\bigcup_{i \in [s]} K_i$ is called an $s$-bunch if $V(K_i) \setminus \left(\bigcup_{j \in [i-1]} V(K_j)\right) \neq \emptyset$ and $\left|V(K_i) \cap \left(\bigcup_{j \in [i-1]} V(K_j)\right)\right| \ge 2$, for each $i = 2, \dots, s$.
\end{dfn}

It is easy to observe that every large enough collection contains a large bunch.

\begin{clm}
\label{clm:collection_to_bunch}
    Let $G$ be a graph and $t \in \mathbb{N}$. Then every $K$-collection $\cK$ of $G$ on at least $tv(K)$ vertices contains an $s$-bunch $B$ of copies of $K$ with $s \geq t$ and $tv(K) \le v(B) \le (t+1)v(K) $.
\end{clm}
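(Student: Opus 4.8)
The plan is to build the bunch greedily, adding one copy of $K$ at a time, and to use the connectivity of the auxiliary graph $G_K$ restricted to $\cK$ to guarantee that each newly added copy both meets the current union in at least two vertices and contributes at least one new one. Fix an arbitrary $K_{i_1} \in \cK$ and put $U_1 := V(K_{i_1})$, so $|U_1| = v(K)$. Suppose distinct copies $K_{i_1}, \dots, K_{i_j} \in \cK$ have already been chosen, with $U_j := \bigcup_{l \in [j]} V(K_{i_l})$, and that $|U_j| < t\,v(K)$; I will produce a suitable $K_{i_{j+1}}$. I stop at the first index $s$ with $|U_s| \ge t\,v(K)$, which must eventually occur because the union strictly grows at every step and $|V(\cK)| \ge t\,v(K)$ by hypothesis.

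The extension step is the heart of the argument. Since $|V(\cK)| \ge t\,v(K) > |U_j|$, some copy $K^* \in \cK$ satisfies $V(K^*) \not\subseteq U_j$. As $\cK$ corresponds to a connected component of $G_K$, there is a path $K_{i_1} = L_0, L_1, \dots, L_m = K^*$ with every $L_p \in \cK$ and $|V(L_{p-1}) \cap V(L_p)| \ge 2$ for all $p \in [m]$. Let $q$ be minimal with $V(L_q) \not\subseteq U_j$; this is well defined because $L_m$ qualifies, and $q \ge 1$ because $V(L_0) = V(K_{i_1}) \subseteq U_j$. By minimality $V(L_{q-1}) \subseteq U_j$, hence $|V(L_q) \cap U_j| \ge |V(L_q) \cap V(L_{q-1})| \ge 2$, while $V(L_q) \setminus U_j \ne \varnothing$; moreover $L_q$ differs from $K_{i_1}, \dots, K_{i_j}$ since each of those has its vertex set inside $U_j$. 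So I set $K_{i_{j+1}} := L_q$.

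It remains to read off the bounds. Put $B := \bigcup_{l \in [s]} K_{i_l}$, a subgraph of $\cK$. By construction the sequence $(K_{i_1}, \dots, K_{i_s})$ satisfies both clauses of the definition of an $s$-bunch, so $B$ is an $s$-bunch of copies of $K$. Each of the $s-1$ extension steps adds at least one and at most $v(K) - 2$ vertices, so $|U_s| \le v(K) + (s-1)(v(K) - 2) \le s\,v(K)$; together with the stopping condition $|U_s| \ge t\,v(K)$ this forces $s \ge t$. Finally $v(B) = |U_s| \ge t\,v(K)$ by the stopping rule, and since $|U_{s-1}| < t\,v(K)$ and $|U_s| \le |U_{s-1}| + v(K)$ we get $v(B) \le (t+1)v(K)$. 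The main obstacle is precisely the extension step: the naive idea of extending by a copy sharing two vertices with the most recently added copy can fail to bring in a new vertex, and it is the device of walking along a path in $G_K$ until it first escapes $U_j$ that makes the two bunch conditions hold simultaneously.
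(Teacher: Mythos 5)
Your proof is correct and follows essentially the same greedy construction as the paper: start from an arbitrary copy, repeatedly extend by a copy that meets the current union in at least two vertices and contributes a new one, and stop once the union first reaches size $t\,v(K)$. The only difference is that you make explicit (via walking a path in $G_K$ to the first copy escaping $U_j$) why such an extending copy exists; the paper asserts this directly from the connectivity of the component of $G_K$, so your write-up supplies a detail the paper glosses over but is not a different route.
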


\begin{proof}[Proof of Claim~\ref{clm:collection_to_bunch}]
    We start by taking any copy of $K$ in $\cK$ and then construct the bunch recursively as follows. 
    If $\bigcup_{i\in [m]} K_i$ is an $m$-bunch of copies of $K$, we select another copy $K_{m+1}$ of $K$ in the collection $\cK$, such that $V(K_{m+1}) \setminus \left(\bigcup_{j \in [m]} V(K_j)\right) \neq \emptyset$ and $\left|V(K_{m+1}) \cap \left(\bigcup_{j \in [m]} V(K_j)\right)\right| \ge 2$.
    Note that this will give an $(m+1)$-bunch of copies of $K$. 
    Since the family $\cK$ corresponds to a connected component of the auxiliary graph $G_K$, we are able to find such new copy of $K$ if  $V(\cK) \setminus \left(\bigcup_{i \in [m]} V(K_i) \right) \neq \emptyset$, i.e. until we cover all the vertices of $\cK$.
    In particular, since $v(\cK) \ge t v(K)$, we can construct an $s$-bunch $B=\bigcup_{i\in [s]} K_i$ of copies of $K$ with $tv(K)\leq v(B) <  (t+1) v(K) $.
    Moreover, since $ t v(K)  \leq v(B) \le v(K) + (s-1)(v(K)-2)$, we get $s \ge t$.
\end{proof}

Observe that for the $s$-bunch where any two copies of $K$ intersect in the same two adjacent vertices, we have $d(B) = \frac{ e(K) + (s-1) (e(K)-1))}{ v(K) + (s-1)(v(K)-2)}$, which tends to $\frac{e(K)-1}{v(K)-2}=m_2(K)$ as $s$ tends to infinity. 
Using a similar argument as in~\cite{milos_tibor}, we show that this is best possible in the following sense.

\begin{clm}
\label{clm:min 2-density}
    Let $K$ be a graph such that $d_2(K)=m_2(K)$, $s \in \mathbb{N}$ and $\delta>0$ such that for all $x\geq s-1$ we have $\frac{e(K)+m_2(K)x}{v(K)+x} \geq m_2(K)-\delta$.
    Then for any $s$-bunch $B$ of copies of $K$, we have $d(B) \ge m_2(K)-\delta$.
\end{clm}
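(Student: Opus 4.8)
The plan is to induct on $s$, the number of copies of $K$ in the bunch, showing directly that $e(B) \ge m_2(K)\cdot v(B) - (m_2(K)v(K) - e(K))$, which for $v(B) \ge v(K) + (s-1)$ (always true for an $s$-bunch, since each new copy adds at least one vertex) rearranges to $d(B)\ge m_2(K)-\delta$ via the hypothesis on $x = v(B)-v(K)$. First I would set up the base case $s=1$: then $B$ is a single copy of $K$, so $e(B)=e(K)$ and $v(B)=v(K)$, and the inequality $e(B)\ge m_2(K)v(B) - (m_2(K)v(K)-e(K))$ holds with equality.

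For the inductive step, write $B = B' \cup K_s$ where $B' = \bigcup_{i\in[s-1]} K_i$ is an $(s-1)$-bunch and $K_s$ is the final copy. Let $a := |V(K_s)\setminus V(B')| \ge 1$ be the number of genuinely new vertices; by the definition of an $s$-bunch, $K_s$ shares at least two vertices with $B'$, so $a \le v(K)-2$. The key quantitative step is to lower-bound the number of new edges $e(B) - e(B')$ contributed by $K_s$. Here I would invoke $d_2(K)=m_2(K)$: every subgraph $K'\subseteq K$ on at least $3$ vertices satisfies $e(K') - 1 \le m_2(K)(v(K')-2)$, hence any subgraph on $v(K)-a$ vertices (in particular the intersection $K\cap B'$ sitting inside $K_s$) has at most $m_2(K)(v(K)-a-2)+1$ edges when $v(K)-a\ge 3$, and at most $1$ edge when $v(K)-a=2$; in either case at most $m_2(K)(v(K)-a-2)+1$ edges. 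Therefore the number of new edges is at least
\[
e(K) - \bigl(m_2(K)(v(K)-a-2)+1\bigr) = \bigl(e(K)-1\bigr) - m_2(K)\bigl(v(K)-2\bigr) + m_2(K)\,a = m_2(K)\,a,
\]
using $e(K)-1 = m_2(K)(v(K)-2)$ once more. Combining with the inductive hypothesis $e(B')\ge m_2(K)v(B') - (m_2(K)v(K)-e(K))$ and $v(B)=v(B')+a$ gives exactly the claimed bound on $e(B)$, completing the induction. Finally, translating to density: with $x:=v(B)-v(K)\ge s-1$, the bound reads $e(B)\ge m_2(K)v(B) - m_2(K)v(K) + e(K) = \bigl(e(K)+m_2(K)x\bigr)$ after substituting $v(B)=v(K)+x$, hence $d(B) = e(B)/v(B) \ge \frac{e(K)+m_2(K)x}{v(K)+x}\ge m_2(K)-\delta$ by hypothesis.

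The main obstacle I anticipate is the bound on new edges: one must be careful that the intersection of $K_s$ with $B'$ really is a subgraph of $K$ on $v(K)-a$ vertices to which $d_2(K)\le m_2(K)$ (equivalently $d_2(K)=m_2(K)$) applies, and to handle the boundary case $v(K)-a = 2$ separately, where the $2$-density formula is undefined but the intersection has at most one edge. Everything else is bookkeeping. One should also double-check the edge cases where $K$ has very few vertices, but these are subsumed by the careful case analysis above, and the statement's hypothesis $d_2(K)=m_2(K)$ is exactly what makes the induction close.
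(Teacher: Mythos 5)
Your proposal is correct and takes essentially the same approach as the paper: both arguments build the bunch one copy at a time and bound the new edges contributed by $K_i$ using the subgraph $2$-density inequality, which rearranges to $e(K)-e(S)\ge m_2(K)\bigl(v(K)-v(S)\bigr)$ for $S\subset K$ with $v(S)\ge 2$. The paper packages this as a single telescoping sum for $e(B)$ and $v(B)$ rather than a formal induction, and uses the graph intersection $S_i = K_i\cap\bigl(\bigcup_{j<i}K_j\bigr)$ where you use the induced subgraph $K_s\bigl[V(K_s)\cap V(B')\bigr]$ (a slightly weaker but still sufficient bound); the extra care you take with the $v(K)-a=2$ boundary case is a welcome precision that the paper treats implicitly.
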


\begin{proof}[Proof of Claim~\ref{clm:min 2-density}]
    Let $B=\bigcup_{i \in [s]} K_i$ be any $s$-bunch of copies of $K$.
    First observe that for every $S \subset K$ we have $\frac{e(S)-1}{v(S)-2}\leq \frac{e(K)-1}{v(K)-2}$, which can be rearranged as $\frac{e(K)-e(S)}{v(K)-v(S)} \geq \frac{e(K)-1}{v(K)-2} = m_2(K)$, which in turn gives
    \begin{equation}
    \label{eq:2-density}
        e(K)-e(S) \geq m_2(K) \cdot (v(K)-v(S))\, .
    \end{equation}
    Setting $S_i = K_i \cap (\bigcup_{j \in [i-1]} K_j)$ for each $i=2,\dots,s$, we have
    \begin{align*}
    d(B) = \frac{e(B)}{v(B)}
    	& = \frac{ e(K) + \sum_{i\geq 2}(e(K_i)-e(S_i)) }{ v(K) + \sum_{i\geq 2}(v(K_i)-v(S_i)) } \\
    	& \stackrel{\eqref{eq:2-density}}{\geq} \frac{ e(K) + m_2(K)\sum_{i\geq 2}(v(K_i)-v(S_i)) }{ v(K) + \sum_{i\geq 2}(v(K_i)-v(S_i))} 
    	  \geq m_2(K)-\delta\, ,
    \end{align*}
    where the last inequality follows from the assumption on $\delta$ and as $\sum_{i\geq 2}(v(K)-v(S_i)) \ge s-1$.
\end{proof}

Having the above claims in hand, we can now describe and analyse Breaker's strategy. Given any constant $\eps>0$ and any $n\in \mathbb{N}$ large enough, we show that Breaker can block all copies of $H$ in  
at most $\left(\frac{1}{m_2(H)} + \eps \right) \logb (n)$  stages.
For this,
let $K$ be any subgraph of $H$ such that $\frac{e(K)-1}{v(K)-2}=m_2(H)$, and notice that $d_2(K)=m_2(K)=m_2(H)$. 
Next, let $\delta=\delta(H,\eps)>0$ be a constant such that 
\[
\frac{1}{m_2(K)-\delta} < \frac{1}{m_2(K)}+\frac{\eps}{4}
\, ,
\]
and pick $t=t(H,\eps,\delta)\in \mathbb{N}$ such that for all $x\geq t-1$ it holds that
\[
\frac{e(K)+m_2(K)x}{v(K)+x} \geq m_2(K)-\delta
~~~
\text{and} 
~~~
\frac{(t+2)v(K)}{(m_2(K)-\delta) \cdot t \cdot v(K)-1} < \frac{1}{m_2(K)-\delta}+\frac{\eps}{4}\, .
\]

As already pointed out, Breaker's strategy is based on two phases: first he blocks all $K$-collections on at least $tv(K)$ vertices (see Claim~\ref{clm:ES}), and then he blocks all remaining copies of $K$.
Since $K$ is a subgraph of $H$, at this point Breaker will have blocked all copies of $H$ as well.

\begin{clm}[First phase of Breaker's strategy]
\label{clm:ES}
    Breaker has a strategy so that after $\left( \frac{1}{m_2(K)}+ \frac{\eps}{2} \right) \logb (n)$ stages, the board
    does not contain a $K$-collection with at least 
    $tv(K)$ vertices.
\end{clm}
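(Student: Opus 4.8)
The plan is to play a multistage Maker-Breaker game where, in each stage, Breaker applies a single-stage ``clustering'' argument that behaves like the Erd\H{o}s--Selfridge / box-game bound: if the current board is sufficiently sparse, Breaker can make sure that \emph{Maker's} graph at the end of the stage does not contain a $K$-collection spanning too many vertices, while still halving the number of edges (up to the factor $b+1$). Concretely, I would first record the single-stage statement: there is a constant $c=c(K)$ such that if a graph $G$ on $n$ vertices has at most $m$ edges and $m \le c\cdot N^{2-1/m_2(K)}$ (where $N$ is an appropriate scale), then in the $(1:b)$ Maker-Breaker game on $E(G)$, Breaker has a strategy ensuring Maker's final graph contains no $K$-collection on $\ge t v(K)$ vertices. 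The proof of this sub-statement is the pairing-type / potential argument from~\cite{milos_tibor}: by Claim~\ref{clm:collection_to_bunch} it suffices to block all $t$-bunches, and by Claim~\ref{clm:min 2-density} every such bunch $B$ has $e(B) \ge (m_2(K)-\delta)v(B) \ge (m_2(K)-\delta)\, t\, v(K) - O(1)$; one then counts the number of potential $t$-bunches in $K_n$ (there are at most $n^{(t+1)v(K)}$ of them, each with $\Theta(t v(K))$ edges, i.e.\ at least $(m_2(K)-\delta)tv(K)-1$ edges), and applies Theorem~\ref{beck_variant}: the expected number of bunches Maker completes is at most
\[
\sum_{B} (b+1)^{-e(B)+1} \le n^{(t+1)v(K)} \cdot (b+1)^{-((m_2(K)-\delta)tv(K)-1)+1},
\]
which is $o(1)$ provided $b+1 \le n^{\theta}$ for a suitable $\theta$ depending on the gap between $(t+1)/t$ and $m_2(K)-\delta$; the choices of $\delta$ and $t$ made just before Claim~\ref{clm:ES} are exactly calibrated so that this exponent is favourable.

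Next I would iterate this over stages. Writing $m_i$ for the number of edges on the board at the start of stage $i$, we have $m_0 = \binom{n}{2}$ and, since Breaker claims $b$ edges for each Maker edge, $m_{i+1} \le \lceil m_i/(b+1)\rceil$, so $m_i \le n^2 (b+1)^{-i}$ for all $i$. The sparsity hypothesis $m_i \le c\, n^{2 - 1/m_2(K)}$ needed to run the sub-statement in stage $i$ therefore holds once $(b+1)^{-i} n^2 \le c\, n^{2 - 1/m_2(K)}$, i.e.\ once $i \ge \big(\tfrac{1}{m_2(K)} + o(1)\big)\logb(n)$ — but that is not quite enough, since we need Breaker to have \emph{already} destroyed the clustering before that point so that he only has small clusters left. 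The right way to organise this: for the first $i_0 := \big(\tfrac{1}{m_2(K)} + \tfrac{\eps}{2}\big)\logb(n)$ stages, in each stage where the board is sparse enough Breaker plays the clustering strategy; in the (few) initial stages where the board is still too dense he plays arbitrarily. Since $\tfrac{1}{m_2(K)} < \tfrac{1}{m_2(K)} + \tfrac{\eps}{2}$, there is a genuine surplus of $\Theta(\logb n)$ stages at the end of the window during which the board is both sparse enough and Breaker is running the clustering strategy, and because at the \emph{end} of stage $i$ Maker's graph has no large $K$-collection, and the board of stage $i+1$ \emph{is} Maker's graph, this property is inherited (a $K$-collection of $\cX_{i+1}$ is in particular a subgraph, hence a union of copies of $K$, of Maker's stage-$i$ graph, so has fewer than $tv(K)$ vertices). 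Thus once Breaker has played one successful clustering stage, the ``no large $K$-collection'' property persists for free through all later stages. Hence after $i_0$ stages the board contains no $K$-collection on $\ge t v(K)$ vertices, which is the claim.

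The main obstacle — and the reason the proof needs the careful choices of $\delta$ and $t$ — is making the counting bound in Theorem~\ref{beck_variant} beat the number of candidate bunches \emph{uniformly over all stages in the window}, i.e.\ controlling the sum $\sum_{B}(b+1)^{-e(B)+1}$ when $b$ is as large as subpolynomial allows and when $e(B)$ could be as small as $(m_2(K)-\delta)tv(K)-1$. One subtlety is that Theorem~\ref{beck_variant} bounds the \emph{number} of completed winning sets, not the probability of completing any, so one cannot simply ask for the sum to be $<1$ in a single stage where the board is dense; this is exactly why the argument is front-loaded to the sparse regime and why we carry the clustering property forward rather than re-establishing it each stage. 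A second, more bookkeeping-type obstacle is that the board in stage $i\ge 1$ is an arbitrary graph (Maker's previous graph), not $K_n$, so the count of candidate bunches must be taken in $K_n$ (a valid over-count), and one must check that ``$B$ is completed by Maker in stage $i$'' implies ``$B\subseteq$ Maker's stage-$i$ graph'' so that Theorem~\ref{beck_variant} applies on the hypergraph whose vertex set is the current board and whose edges are the edge-sets of all $t$-bunches contained in that board. Both points are routine once set up, but they are where care is required; the rest is the arithmetic already prepared in the paragraph preceding the claim.
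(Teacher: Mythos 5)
Your approach diverges from the paper's in a way that hides a real gap. You propose a \emph{single-stage} clustering statement: that if the board is sparse enough, Breaker can ensure in one stage that Maker's resulting graph contains no large $K$-collection, and that thereafter this property is inherited for free. The inheritance observation is correct (a $K$-collection in $\mathcal{X}_{i+1}$ is a subgraph of Maker's stage-$i$ graph). But the single-stage statement itself cannot be established by Theorem~\ref{beck_variant} for the regime the claim covers. Your own bound reads
\[
\sum_{B}(b+1)^{-e(B)+1}\ \le\ n^{(t+1)v(K)}\cdot (b+1)^{-(m_2(K)-\delta)tv(K)+2},
\]
and for this to be $<1$ one would need $\log_{b+1}(n)$ bounded by a constant depending only on $K,t,\delta$, i.e.\ $b+1\ge n^{\theta}$ for a fixed $\theta>0$. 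But the claim is asserted for all subpolynomial $b$, including constant $b$, where $\log_{b+1}(n)\to\infty$. Your hedging clause ``provided $b+1\le n^{\theta}$'' is in fact the wrong direction, and no choice of $\delta,t$ can repair it. Restricting the count to bunches inside a sparse board $G$ with $m\le c\,n^{2-1/m_2(K)}$ edges does not save the single-stage bound either: the number of candidate bunches drops to at most $m^{e(B)}\approx n^{(2-1/m_2(K))(m_2(K)-\delta)tv(K)}$, which still gives an exponent linear in $tv(K)$; the resulting condition is again $\log_{b+1}(n)\lesssim$ constant. So there is no single stage in the whole game, sparse board or not, in which the Beck-type sum becomes less than $1$ for subpolynomial $b$.

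The paper's proof sidesteps this by never asking for a single stage to finish the job. It lets $\mathcal{F}_i\subseteq\mathcal{F}_{i-1}$ be the family of bunches Maker has completely occupied by the end of stage $i$ (which are exactly the bunches that can still live on the next board), and applies Theorem~\ref{beck_variant} in \emph{each} stage to get the multiplicative recursion
\[
|\mathcal{F}_i|\ \le\ |\mathcal{F}_{i-1}|\cdot (b+1)^{-(m_2(K)-\delta)\,t\,v(K)+1},
\]
using Claim~\ref{clm:min 2-density} for the uniform lower bound $e(B)\ge (m_2(K)-\delta)\,t\,v(K)$. Iterating over $k=\big(\tfrac{1}{m_2(K)}+\tfrac{\eps}{2}\big)\log_{b+1}(n)$ stages, the accumulated factor is $(b+1)^{-k[(m_2(K)-\delta)tv(K)-1]}$, which \emph{is} polynomially small in $n$ (the effective bias $(b+1)^k$ is a power of $n$ by design), and then the choices of $\delta,t$ guarantee $|\mathcal{F}_k|<1$. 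Your bookkeeping, your use of Claims~\ref{clm:collection_to_bunch} and~\ref{clm:min 2-density}, and the over-count $n^{(t+1)v(K)}$ (the paper uses $n^{(t+2)v(K)}$ after accounting for edge choices) all appear correctly; what is missing is to run Theorem~\ref{beck_variant} cumulatively across the $k$ stages rather than trying to win in one sparse stage. Once you replace your single-stage statement with this geometric decrease of the surviving-bunch count, the argument goes through.
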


\begin{proof}[Proof of Claim~\ref{clm:ES}]
    By Claim~\ref{clm:collection_to_bunch} we know that,
	if the board contains a $K$-collection on at least $tv(K)$ vertices, then it must also contain an $s$-bunch $B$ with $s\geq t$ and $tv(K)\leq v(B)\leq (t+1)v(K)$. Hence, Breaker can concentrate on blocking such bunches:
    \[
    \cF :=
    \left\{
    B=\bigcup_{i\in [s]} K_i:~ 
    \begin{array}{c}
    B~\text{ is an $s$-bunch of copies $K_i$ of $K$ in $K_n$ with }\\
    s\geq t\text{ and } tv(K)\leq v(B) \leq (t+1)v(K)
    \end{array}
    \right\}\, .
    \]
Let $\mathcal{F}_0:=\mathcal{F}$, and throughout the game denote with $\mathcal{F}_i\subseteq \mathcal{F}_{i-1}$ the family of all elements $F\in\mathcal{F}_{i-1}$ that Maker has fully occupied at the end of Stage $i$. In order to prove the claim, we must show that Breaker has a strategy to ensure $\mathcal{F}_k=\varnothing$ for $k=\left( \frac{1}{m_2(K)}+ \frac{\eps}{2} \right) \logb (n)$.
\smallskip    
    
    Now, as any bunch in $\cF_0$ has at most $(t+1)v(K)$ vertices, we have
    \begin{equation}
    \label{eq:F0}
       |\cF_0| \le n^{(t+1)v(K)} \cdot 2^{((t+1)v(K))^2} < n^{(t+2)v(K)}\, .
    \end{equation} 
    Using Theorem~\ref{beck_variant}, Breaker has a strategy to ensure that 
    \begin{equation}
    \label{eq:ES}
        |\mathcal{F}_i| \leq \sum_{B\in \mathcal{F}_{i-1}} (b+1)^{-e(B)+1} 
          \leq |\mathcal{F}_{i-1}|\cdot (b+1)^{-(m_2(K)-\delta) \cdot t \cdot v(K)+1}\,
    \end{equation}
    for each positive $i \in \mathbb{N}$, where in the last inequality we use that $e(B)=d(B)\cdot v(B) \geq (m_2(K)-\delta)\cdot t \cdot v(K)$ for all $B\in \mathcal{F}_{i-1}\subseteq \mathcal{F}_0$, which follows from the Claim~\ref{clm:min 2-density} (note its assumptions hold by the choice of $t$).
    Combining~\eqref{eq:F0} and~\eqref{eq:ES}, we observe that
    \[
    |\mathcal{F}_k| < n^{(t+2)v(K)}\cdot (b+1)^{k \cdot [-(m_2(K)-\delta) \cdot t \cdot v(K)+1]} \leq 1 \]
since, by our choice of $\delta$ and $t$, we have $k > \tfrac{(t+2)v(K)}{(m_2(K)-\delta)\cdot t \cdot v(K)-1} \logb (n)$.
Therefore $\mathcal{F}_k=\varnothing$ and this finishes the proof of the claim.
\end{proof}

Now, consider the first moment when Breaker made sure that every remaining $K$-collection has fewer than $tv(K)$ vertices, and denote them by $\mathcal{K}_1,\ldots,\mathcal{K}_{\ell}$, $\ell\in \mathbb{N}_0$.
Since any two such collections are edge-disjoint by definition, we know then that each remaining copy of $K$ must appear in a unique collection.
From now on, in each further stage, Breaker plays as follows: whenever Maker claims an edge of $E(\mathcal{K}_i)$ for some $i\in [\ell]$, Breaker claims as many edges as possible of the same collection. In all other cases, Breaker plays arbitrarily. 
Since each of the collections $\mathcal{K}_i$ has fewer than 
$(tv(K))^2$ edges, it takes less than $\logb ((tv(K))^2)$ stages
until from each collection there is at most one edge left and hence all copies of $K$ are blocked. 
Combining the two phases, Breaker wins within
\[
\left( \frac{1}{m_2(K)}+ \frac{\eps}{2} \right) \logb (n) + 2\logb (tv(K)) \le \left( \frac{1}{m_2(K)}+ \eps \right) \logb (n) = \left( \frac{1}{m_2(H)}+ \eps \right) \logb (n)
\]
stages. Hence, the upper bound of Theorem~\ref{thm:H-game.multi} is proven. \hfill $\Box$

\bigskip

%
%
%

\section{Pancyclicity game}\label{sec:PAN.multi}

In this section we prove Theorem~\ref{thm:PAN.multi}.
The upper bound $\tau(\mathcal{PAN}_{n},b) \leq \left( \frac{1}{2} + o(1) \right) \log_{b+1}(n)$ follows from Theorem~\ref{thm:H-game.multi}. 
Indeed Breaker plays according to his strategy given by Theorem~\ref{thm:H-game.multi} with $H = C_3$. By doing this, he can ensure that the board does not contain any triangle after $\left(\frac{1}{2} + o(1)\right)\logb(n)$ stages and thus it cannot be pancyclic.
For the lower bound, we will use the following criterion for a graph to be pancyclic, which is a corollary of Theorem~1.1 in~\cite{keevash2010pancyclicity}.

\begin{thm}[Corollary from Theorem 1.1 in \cite{keevash2010pancyclicity}]\label{thm:pancriterion}
    Let $G$ be a graph on $n$ vertices such that
    \begin{enumerate}[label=\upshape(P\arabic*)]
        \item \label{pancyclic-independent} every independent set of $G$ is of size at most $\sqrt{n}$;
        \item \label{pancyclic-connected} $G$ is $600\sqrt{n}$-vertex-connected,
    \end{enumerate}
    then $G$ is pancyclic.
\end{thm}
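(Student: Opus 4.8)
The plan is to derive Theorem~\ref{thm:pancriterion} as a direct specialisation of Theorem~1.1 in~\cite{keevash2010pancyclicity}, which already has ``$G$ is pancyclic'' as its conclusion; the only thing to do is to translate hypotheses~\ref{pancyclic-independent} and~\ref{pancyclic-connected} into the form in which that theorem is phrased. In the relevant form, Theorem~1.1 of~\cite{keevash2010pancyclicity} asserts that a graph $G$ is pancyclic as soon as its vertex-connectivity is large enough compared with its independence number, the precise requirement being $\kappa(G) \ge 600\,\alpha(G)$ --- this is also where the constant $600$ in our statement originates. So the steps are: (i) read off $\alpha(G)$ and $\kappa(G)$ from the two hypotheses; (ii) verify $\kappa(G) \ge 600\,\alpha(G)$; (iii) apply~\cite{keevash2010pancyclicity}.

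For (i), hypothesis~\ref{pancyclic-independent} says precisely $\alpha(G) \le \sqrt{n}$, and hypothesis~\ref{pancyclic-connected} says precisely $\kappa(G) \ge 600\sqrt{n}$. For (ii) one then simply chains these bounds,
\[
\kappa(G) \;\ge\; 600\sqrt{n} \;\ge\; 600\,\alpha(G),
\]
using $\alpha(G)\le\sqrt n$ in the last step. For (iii), Theorem~1.1 of~\cite{keevash2010pancyclicity} now yields that $G$ is pancyclic. If one prefers the version of that result that takes a Hamilton cycle as input, I would first observe that $\kappa(G) \ge 600\sqrt{n} \ge \sqrt{n} \ge \alpha(G)$, so the classical Chv\'{a}tal--Erd\H{o}s theorem already provides a Hamilton cycle in $G$, and then feed it into the pancyclicity statement. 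It is also worth noting, to dispose of the usual exception in Bondy-type pancyclicity results, that the balanced complete bipartite graph $K_{m,m}$ has $\kappa(K_{m,m}) = m = \alpha(K_{m,m})$ and hence never satisfies $\kappa \ge 600\,\alpha$, so no exceptional graph survives here.

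I do not expect any genuine obstacle: the whole argument is the one-line inequality above plus the citation. The single point that needs care is bookkeeping --- one must confirm that the absolute constant in Theorem~1.1 of~\cite{keevash2010pancyclicity} is at most $600$, so that substituting the common threshold $\sqrt{n}$ for both $\alpha(G)$ and $\kappa(G)/600$ wastes nothing. This matters because the later application to the multistage pancyclicity game is calibrated to exactly these thresholds, the bound $\alpha(G)\le\sqrt n$ being the one Maker can actually enforce on the board.
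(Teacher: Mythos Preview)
Your proposal is correct and matches the paper's approach exactly: the paper simply states this as a corollary of Theorem~1.1 in~\cite{keevash2010pancyclicity} with the parenthetical remark ``(with $\alpha(G) \leq \sqrt{n}$)'', which is precisely your derivation $\kappa(G)\ge 600\sqrt{n}\ge 600\,\alpha(G)$. The additional remarks about Chv\'atal--Erd\H{o}s and the $K_{m,m}$ exception are not needed but do no harm.
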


\subsection{Proof of the lower bound in Theorem~\ref{thm:PAN.multi} (Maker's strategy)}

Let $n$ be large enough and consider the family $\mathcal{F} := \mathcal{F}_1 \cup \mathcal{F}_2 \cup \mathcal{F}_3$ with
\begin{align*}
    \mathcal{F}_1 &:= \{E_{K_n}(A): A \subset V(K_n), |A| = \sqrt{n}\},\\
    \mathcal{F}_2 &:= \{E_{K_n}(A,B):  A,B \subset V(K_n), A \cap B = \emptyset, |A| = 1, |B| = n - 700\sqrt{n}\},\\
    \mathcal{F}_3 &:= \{E_{K_n}(A,B): A,B \subset V(K_n), A \cap B = \emptyset, |A| = \sqrt{n}, |B| = \sqrt{n}\},
\end{align*}
and set $k_j := \min\{|F|: F \in \mathcal{F}_j\}$ for $j \in [3]$. Maker plays according to Lemma~\ref{lem:multistagediscrepancy} with $$\gamma = 2 \cdot \frac{\ln(b) + \ln \logb(n) + 5}{\ln(\sqrt{n}) - \ln \ln(n) - \ln(3000)}\, ,$$ $s=3$ and hypergraph $\mathcal{H} = (E(K_n), \mathcal{F})$. Note that $\gamma = o(1)$ since $b$ is subpolynomial in $n$.

First, we check whether the condition~\eqref{property:b} of Lemma~\ref{lem:multistagediscrepancy} is met. We have $|\mathcal{F}_1| = \binom{n}{\sqrt{n}}$, $|\mathcal{F}_2| = n \binom{n-1}{700\sqrt{n}-1}$, and $|\mathcal{F}_3| = \binom{n}{\sqrt{n}}\binom{n-\sqrt{n}}{\sqrt{n}}$. Further, we have $k_1 = \binom{\sqrt{n}}{2}$, $k_2 = n - 700\sqrt{n}$, and $k_3 = n$.
Note that $\frac{n}{4} \leq k_j \leq n$ and $\sqrt{n}^{\sqrt{n}} \leq |F_j| \leq n^{700\sqrt{n}}$ and thus $\frac{\sqrt{n}}{3000\ln(n)} \leq \frac{k_j}{\ln(3|\mathcal{F}_j|)} \leq 2\frac{\sqrt{n}}{\ln(n)}$ for every $j \in [3]$. Using this, we can estimate for each $j \in [3]$, 
\begin{align*}
    \left(\frac{k_j}{\ln(3|\mathcal{F}_j|)}\right)^{\frac{\gamma}{2}} & \geq \left(\frac{\sqrt{n}}{3000\ln(n)}\right)^{\frac{\gamma}{2}} = \exp\left(\ln(b) + \ln(\logb(n)) + 5\right)\\ 
    &> 20b\logb(n) \geq 20b\max\left\{1,\logb\left(\frac{k_j}{\ln(3|\mathcal{F}_j|)}\right)\right\},
\end{align*}

and thus~\eqref{property:b} holds. We conclude that Maker can claim an edge of each $F \in \mathcal{F}$ for at least \[(1 - \gamma)\logb\left(\frac{\sqrt{n}}{3000\ln(n)}\right) \geq \left(\frac{1}{2} - \gamma - \frac{\logb(3000\ln(n))}{\logb(n)}\right)\logb(n) = \left(\frac{1}{2} - o(1)\right)\logb(n)\] stages.

It remains to show that if a graph $G$ contains an edge from every $F \in \cF$, then $G$ fulfills \ref{pancyclic-independent} and \ref{pancyclic-connected}. Let $A \subset V(K_n)$ be any vertex set of size $|A| = \sqrt{n}$. Then, using the subfamily $\cF_1$, the graph $G$ has at least one edge within $A$. Therefore, every independent set can be of size at most $\sqrt{n}$ as required by \ref{pancyclic-independent}. Further, let $V(K_n) = A \cup B \cup C$ be any partition of $V(K_n)$ with $|C| = 600\sqrt{n}$. We show that $G$ contains an edge between $A$ and $B$, and thus it is $600\sqrt{n}$-vertex-connected as required by \ref{pancyclic-connected}. Assume $|A| \geq \sqrt{n}$ and $|B| \geq \sqrt{n}$. In this case, using the subfamily $\cF_3$, the graph $G$ contains an edge between $A$ and $B$. Assume otherwise without loss of generality that $|A| < \sqrt{n}$. Using $\mathcal{F}_2$ instead, every vertex in $A$ has at least $700\sqrt{n} > |A| + |C|$ neighbours in $G$, so there needs to be a neighbour in $B$.\hfill $\Box$

\bigskip

%
%
%
%

\section{Connectivity game}\label{sec:Conn.multi_unbiased}

Theorem~\ref{thm:Conn.multi_unbiased} has already been proven in the Bachelor thesis~\cite{barkey} of the first author and we include the argument here for completeness. It is an easy application of Lehman's Theorem~\cite{lehman}
 of which we use the following formulation that can be found in e.g.~\cite{hefetz2014positional}.

\begin{thm}[Theorem 1.1.3 in~\cite{hefetz2014positional}]\label{thm:lehman}
Let $G=(V,E)$ be a
graph on $n$ vertices which admits two edge-disjoint spanning trees. Then in the unbiased Maker-Breaker game on the edge set of $G$, Maker, even as a second player, has a strategy to build a connected spanning tree of $G$ within $n-1$ moves.
\end{thm}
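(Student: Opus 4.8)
The plan is to prove the two matching bounds $\tau(\mathcal{C}_n,1)\ge\lfloor\log_2 n\rfloor-1$ and $\tau(\mathcal{C}_n,1)\le\lfloor\log_2 n\rfloor-1$ separately, using Theorem~\ref{thm:lehman} and the classical converse of Lehman's theorem~\cite{lehman} (for $n$ not too small; the few small values can be checked directly). The quantity to track is $\mathrm{st}(G)$, the maximum number of pairwise edge-disjoint spanning trees of the current board $G$. Two preliminary remarks: since each stage exhausts its board and Maker moves first, the board size is \emph{forced}, namely $|\mathcal{X}_i|=\lceil|\mathcal{X}_{i-1}|/2\rceil$, hence $|\mathcal{X}_i|<\binom{n}{2}2^{-i}+1$ for all $i$; and trivially $\mathrm{st}(G)\le |E(G)|/(n-1)$.

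For the lower bound the key claim is: \emph{if $\mathrm{st}(G)\ge m$ for a graph $G$ on $n$ vertices, then in the unbiased game on $E(G)$ Maker, moving first, can claim a subgraph $G'$ with $\mathrm{st}(G')\ge\lfloor m/2\rfloor$.} To see this, fix edge-disjoint spanning trees $T_1,\dots,T_{2\lfloor m/2\rfloor}$ of $G$ and form the pairwise edge-disjoint pairs $P_j=T_{2j-1}\cup T_{2j}$, $j\in[\lfloor m/2\rfloor]$; each $P_j$ admits two edge-disjoint spanning trees, so by Theorem~\ref{thm:lehman} Maker has a strategy $\sigma_j$ building a spanning tree of $P_j$ even as the second player on $E(P_j)$. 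Maker runs all of these in parallel: whenever Breaker claims an edge of some $P_j$ she replies in $P_j$ according to $\sigma_j$ (skipping prescribed edges she already owns); whenever Breaker claims an edge outside $\bigcup_j P_j$, or she has to move with no reply pending, she plays arbitrarily, which can only help her in whichever $P_j$ the move lies in. Since surplus moves never hurt Maker in a Maker--Breaker game, she ends up owning a spanning tree in each $P_j$, hence $\lfloor m/2\rfloor$ edge-disjoint spanning trees overall. Iterating over the stages from $\mathrm{st}(K_n)=\lfloor n/2\rfloor$, Maker keeps $\mathrm{st}(\mathcal{X}_i)\ge\lfloor n/2^{i+1}\rfloor$ for every $i\le\lfloor\log_2 n\rfloor-1$, and $\lfloor n/2^{i+1}\rfloor\ge1$ there, so $\mathcal{X}_i$ still contains a spanning tree of $K_n$ and $\mathcal{F}_i\ne\varnothing$, giving $\tau(\mathcal{C}_n,1)\ge\lfloor\log_2 n\rfloor-1$.

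For the upper bound set $t:=\lfloor\log_2 n\rfloor-1$ and note that $2^{t+1}=2^{\lfloor\log_2 n\rfloor}\ge(n+1)/2$, so a short computation from the forced sizes gives $|\mathcal{X}_t|<\binom{n}{2}2^{-t}+1\le2(n-1)$, whence $|\mathcal{X}_t|\le2n-3$, $\mathrm{st}(\mathcal{X}_t)\le1$, and $|\mathcal{X}_{t+1}|=\lceil|\mathcal{X}_t|/2\rceil\le n-1$. If $|\mathcal{X}_{t+1}|<n-1$ then $\mathcal{X}_{t+1}$ is disconnected and $\mathcal{F}_{t+1}=\varnothing$ for free. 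The delicate case is $|\mathcal{X}_{t+1}|=n-1$, which forces $|\mathcal{X}_t|=2n-3$: in stage $t+1$ Maker then claims exactly $n-1$ of the $2n-3$ edges of $\mathcal{X}_t$, and Breaker must prevent those from forming a spanning tree. Since the connectivity game on $\mathcal{X}_t$ has a definite winner when Maker moves first, Breaker (as second player) wins it precisely when $\mathcal{X}_t$ is not a first-player win; and, analysing one free Maker move together with Lehman's theorem, for a connected graph with $2n-3$ edges this means exactly that $\mathcal{X}_t$ does \emph{not} decompose into two spanning trees meeting in a single edge --- note that merely having $\mathrm{st}(\mathcal{X}_t)\le1$ is not enough, as $K_3$ shows. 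So the plan is for Breaker to play throughout the first $t$ stages so as to keep Maker's graph from ever acquiring this near-double-tree structure (using the dual of Maker's pairing argument together with the forced size bounds), and then to invoke the converse of Lehman's theorem in stage $t+1$. Together with the lower bound this gives $\tau(\mathcal{C}_n,1)=\lfloor\log_2 n\rfloor-1=\lfloor\log_2 n-1\rfloor$.

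The routine parts are the edge-count arithmetic, the classical fact $\mathrm{st}(K_n)=\lfloor n/2\rfloor$, and the two directions of Lehman's theorem. The two genuinely delicate points are: (i) in the lower bound, making the parallel play rigorous --- i.e.\ that the surplus moves Maker is forced to make in one board $P_j$ never corrupt her strategy in another, a standard strategy-stealing phenomenon that nonetheless must be written out for interleaved, non-uniform play; and (ii) in the upper bound, the tight case $|\mathcal{X}_t|=2n-3$, where the bare edge count falls one stage short, so one must exhibit a Breaker strategy over the earlier stages forcing the final board to be a genuine cut game rather than a neutral, first-player-win position. Point (ii) is the main obstacle.
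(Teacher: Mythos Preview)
Your proposal does not prove the stated theorem. Theorem~\ref{thm:lehman} is Lehman's theorem --- a cited result that the paper quotes but does not prove --- asserting that on any graph with two edge-disjoint spanning trees, Maker can build a spanning tree even as second player. You do not prove this; instead you \emph{use} it as a black box in an attempt to establish Theorem~\ref{thm:Conn.multi_unbiased}, namely $\tau(\mathcal{C}_n,1)=\lfloor\log_2 n-1\rfloor$. So as a proof of the statement you were actually given, the proposal is simply off target.

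If Theorem~\ref{thm:Conn.multi_unbiased} was the intended target, then your lower bound is essentially identical to the paper's: track the number of edge-disjoint spanning trees, pair them up, and run Lehman's second-player strategy on each pair in parallel so that the count halves at every stage. For the upper bound, however, you diverge. The paper dispatches it in one line by pure edge counting (after enough stages the board has fewer than $n-1$ edges, hence is disconnected). You instead compute $|\mathcal{X}_{t+1}|\le n-1$, isolate the boundary case $|\mathcal{X}_{t+1}|=n-1$ (which forces $|\mathcal{X}_t|=2n-3$), and then sketch --- but do not carry out --- a Breaker strategy for it, explicitly calling this ``the main obstacle''. As written, your upper bound is therefore incomplete: you have identified a case the crude edge count does not cover, but you have not shown Breaker actually wins there. (Your concern is, incidentally, not paranoid: the equality $|\mathcal{X}_{t+1}|=n-1$ genuinely occurs, e.g.\ for $n=2^{k}-1$, and for $n=3$ the claimed formula even fails outright, since Maker's two edges of $K_3$ always form a spanning path. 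So the paper's one-line count glosses over precisely the case you flagged --- but that does not close the gap in your own argument.)
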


\begin{proof}[Proof of Theorem~\ref{thm:Conn.multi_unbiased}]
After more than $\log_2(n)-1$ rounds, the board has fewer than $n-1$ edges and cannot contain a spanning tree. Hence, $\tau(\mathcal{C}_n,1)\leq \lfloor \log_2(n)-1\rfloor$ follows.

For the lower bound, Maker's goal is to ensure that Stage $i$ is played on a board which contains $\lfloor \frac{n}{2^i} \rfloor$ edge-disjoint spanning trees, for every $i\leq \lfloor \log_2(n)-1\rfloor$. As $\lfloor \frac{n}{2^t} \rfloor \geq 2$ for $t=\lfloor \log_2(n)-1\rfloor$, Theorem~\ref{thm:lehman} then guarantees that Maker can still claim a spanning tree in round $t$, which yields
$\tau(\mathcal{C}_n,1)\geq \lfloor \log_2(n)-1\rfloor$.

In order to achieve Maker's goal, notice first that it is a well known fact that $E(K_n)$ contains $\lfloor \frac{n}{2} \rfloor$ edge-disjoint spanning trees. Hence, the case $i=1$ of Maker's goal is obvious, and we can proceed by induction. Assume the board at Stage $i-1$ has $\lfloor \frac{n}{2^{i-1}} \rfloor$ spanning trees. Then Maker uses these spanning trees to form $\lfloor \frac{n}{2^i}\rfloor$ pairwise disjoint pairs of spanning trees (possibly ignoring one further spanning tree), and she plays on each pair separately using the strategy from Theorem~\ref{thm:lehman}, i.e. she always plays on the same pair that Breaker played on in his previous move.
In this way Maker occupies $\lfloor \frac{n}{2^i}\rfloor$ edge-disjoint spanning trees, which then belong to the board on which the next stage is played, as wanted.
\end{proof}

\bigskip

\section{Concluding remarks}\label{sec:concluding}

In this paper we have introduced multistage Maker-Breaker games and determined the duration of such games for several natural graph properties. It would be interesting to continue this line of research and hence we suggest the following problem.

\begin{prob}
Determine $\tau(\mathcal{H},b)$ when $b\in\mathbb{N}$ and $\mathcal{H}$ is the hypergraph on vertex set $E(K_n)$, with hyperedges being the edge sets of (i) triangle factors, (ii) copies of a fixed spanning tree or (iii) powers of Hamilton cycles.
\end{prob}

In particular, it would be interesting to determine whether the above games exhibit some random graph intuition.
We have shown that for both the connectivity game
and the Hamilton cycle game, a random graph intuition holds at least asymptotically. However, it seems challenging to get the second order terms right or to see whether Maker can always do at least as good as the random graph argument suggests.

\begin{ques}
Given $b\in\mathbb{N}$ subpolynomial in $n\in\mathbb{N}$. Is it true that
$$\tau(\mathcal{HAM}_n,b)\geq \logb(n) - (1-o(1))\log_{b+1}(\ln (n))\, ?$$
\end{ques}

As already pointed out in the introduction, in all our results it happens that $\tau(\mathcal{H},b)$ is asymptotically the same as $\logb(b_{\mathcal{H}})$, where $b_{\mathcal{H}}$ denotes the threshold bias for the unbiased Maker-Breaker game on the hypergraph $\mathcal{H}$. We wonder whether this is always the case.

\begin{ques}
Does there exist a hypergraph $\mathcal{H}=(\mathcal{X},\mathcal{F})$ and a positive integer $b$, for which
$\tau(\mathcal{H},b)$ and $\log_{b+1}(b_{\mathcal{H}})$ are not asymptotically the same?
\end{ques}

As a natural next step one may also consider multistage variants of other positional games. Such variants of Waiter-Client games and Client-Waiter game on $K_n$, and of Maker-Breaker games on the random graph $G_{n,p}$ are already work in progress. But even other kinds of positional games or boards would be of interest.

\smallskip

Moreover, the following variant of multistage games springs to mind, which we may call \textit{multistage game with stop}. Let a hypergraph $\mathcal{H}=(\mathcal{X},\mathcal{F})$ and a bias $b$ be given, and define $\mathcal{X}_0:=\mathcal{X}$.
For $i \ge 1$, Stage $i$ is played on the board $\mathcal{X}_{i-1}\subseteq \mathcal{X}$, and it ends the first time Maker claims a winning set from $\mathcal{F}_{i-1}\subseteq \mathcal{F}$ completely. Then we define the next board $\mathcal{X}_i$ to consist of all the elements which have been claimed by Maker or are free by the end of Stage $i$, and we let $\mathcal{F}_i\subseteq \mathcal{F}_{i-1}$ be the family of those winning sets which are still fully contained in $\mathcal{X}_i$. Similarly to the threshold $\tau(\cH,b)$, we may define the duration of this game, and denote it by $\tau^{\text{stop}}(\mathcal{H},b)$.

It is easy to see that $\tau^{\text{stop}}(\mathcal{H},b)\geq \tau(\mathcal{H},b)$ always holds.
Moreover, for the connectivity game and hence also the Hamilton cycle game, we obtain that this bound is asymptotically tight, as
$\tau^{\text{stop}}(\mathcal{C}_n,b)\leq (1+o(1))\log_{b+1}(n)$ can be shown if $b$ is subpolynomial in $n$. Indeed Breaker just needs to isolate a single vertex.

However, if we consider local properties instead, e.g.\ the $H$-game, there can be a huge difference between 
$\tau^{\text{stop}}(\mathcal{H},b)$ and
$\tau(\mathcal{H},b)$. Indeed, using that on dense graphs Maker can claim a copy of $H$ fast and applying Tur\'an's Theorem, it is straightforward to prove that 
$\tau^{\text{stop}}(\mathcal{H}_{H,n},1)=\Theta(n^2)$. It would be interesting to understand this variant much better. Hence, we suggest the following problem.

\begin{prob}
Given any graph $H$ and any constant bias $b$, determine $c=c(H,b)$ such that\break $\tau^{\text{stop}}(\mathcal{H}_{H,n},b)=(c\pm o(1))n^2$. 
\end{prob}

Already the case when $H=K_3$ and $b=1$ is open and would be of interest.

\medskip

{\bf Acknowledgment.} We would like to thank the organizers of the online workshop \emph{Positional games on sparse/random graphs} of the Sparse Graphs Coalition, where part of this work has been done.
The first author would also like to thank the group of Prof.~Anusch Taraz for many fruitful discussions while working on his Bachelor thesis.

\bigskip

\bibliographystyle{amsplain}
\bibliography{references}

\end{document}